\documentclass[a4paper,english,10pt,oneside]{article}
\usepackage[english]{babel}
\usepackage{amsmath}
\usepackage{amssymb}
\usepackage{wasysym}

\usepackage{tikz}
\usepackage{pgfplots}

\usepackage{graphicx}
\usepackage{mathtools}
\usepackage{amsthm}
\usepackage{booktabs}
\usepackage{xspace}
\usepackage{hyperref}
\usepackage[font=small]{caption}


\usepackage{todonotes}

\setlength{\parindent}{0em}
\setlength{\parskip}{1em}

\graphicspath{{./pictures/}}

\definecolor{darkblue}{rgb}{0,0,.5}
\DeclareMathOperator{\rank}{rk} 
\DeclareMathOperator{\proj}{proj} 
\DeclareMathOperator{\ext}{ext} 
\DeclareMathOperator{\clext}{\overline{ext}} 
\DeclareMathOperator{\cone}{cone} 
\DeclareMathOperator{\shw}{shw} 
\DeclareMathOperator{\conv}{conv} 
\DeclareMathOperator{\clconv}{\overline{conv}} 
\DeclareMathOperator*{\argmax}{arg\,max} 
\DeclareMathOperator{\cl}{cl} 
\newcommand{\xlp}{\bar{x}} 


\newcommand{\st}{\,:\,}


\usepackage{xifthen}
\ifthenelse{\isundefined{\T}}{%
  \newcommand{\T}{\mathsf{T}}
  }{%
  \renewcommand{\T}{\mathsf{T}}
}

\theoremstyle{plain}
\newtheorem{theorem}{Theorem}
\newtheorem{proposition}[theorem]{Proposition}
\newtheorem{lemma}[theorem]{Lemma}
\newtheorem{corollary}[theorem]{Corollary}
\theoremstyle{definition}
\newtheorem{definition}[theorem]{Definition}
\newtheorem{remark}[theorem]{Remark}
\newtheorem{question}[theorem]{Question}
\newtheorem{observation}[theorem]{Observation}
\newtheorem{XxmpX}[]{Example}
\newenvironment{example}{\pushQED{\qed}\begin{XxmpX}}{\popQED\end{XxmpX}}

\usepackage{zibtitlepage}

\newcommand{\myorcidlink}[1]{\,\href{https://orcid.org/#1}{\raisebox{-0.45ex}{\includegraphics[width=1.8ex]{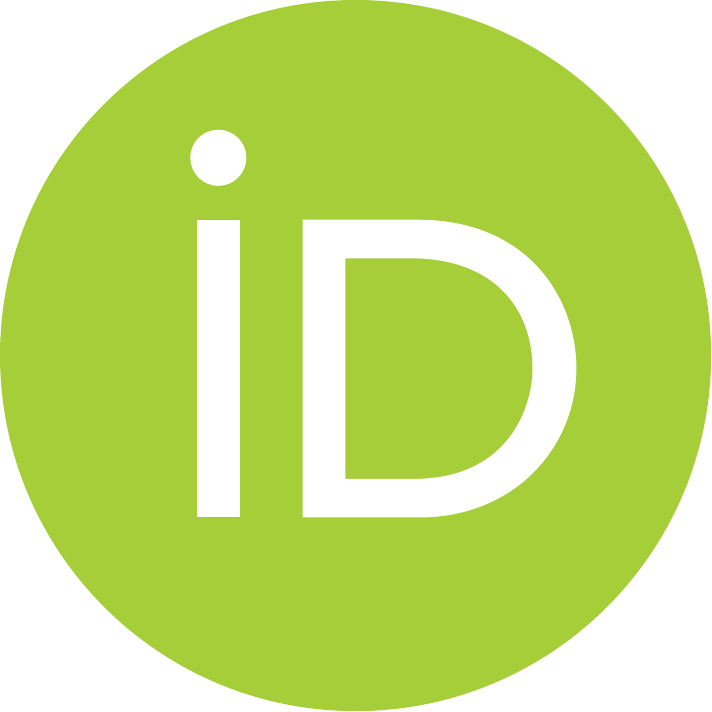}}}}

\ZTPAuthor{Felipe~Serrano\protect\myorcidlink{0000-0002-7892-3951}}
\ZTPTitle{\bf Visible points, the separation problem, and applications to MINLP}

\ZTPInfo{This work has been supported by the Research Campus MODAL
\emph{Mathematical Optimization and Data Analysis Laboratories} funded by the
Federal Ministry of Education and Research (BMBF Grant~05M14ZAM).\\
The author thanks the Schloss Dagstuhl -- Leibniz Center for Informatics for
hosting the Seminar 18081 "Designing and Implementing Algorithms for
Mixed-Integer Nonlinear Optimization" for providing the environment to develop
the ideas in this paper.}

\ZTPNumber{19-38}
\ZTPMonth{July}
\ZTPYear{2019}

\title{\bf Visible points, the separation problem, and applications to MINLP}
\author{
  Felipe Serrano\protect\myorcidlink{0000-0002-7892-3951}\thanks{Zuse Institute Berlin, Takustr.~7, 14195~Berlin,
  Germany, \texttt{serrano@zib.de}}
}

\begin{document}
\zibtitlepage
\maketitle

\begin{abstract}
  In this paper we introduce a technique to produce tighter cutting planes for
  mixed-integer non-linear programs.
  Usually, a cutting plane is generated to cut off a specific infeasible point.
  The underlying idea is to use the infeasible point to restrict the feasible
  region in order to obtain a tighter domain.
  To ensure validity, we require that every valid cut separating the infeasible
  point from the restricted feasible region is still valid for the original
  feasible region.
  We translate this requirement in terms of the separation problem and the
  reverse polar.
  In particular, if the reverse polar of the restricted feasible region is the
  same as the reverse polar of the feasible region, then any cut valid for the
  restricted feasible region that \emph{separates} the infeasible point, is
  valid for the feasible region.

  We show that the reverse polar of the \emph{visible points} of the feasible
  region from the infeasible point coincides with the reverse polar of the
  feasible region.
  In the special where the feasible region is described by a single non-convex
  constraint intersected with a convex set we provide a characterization of the
  visible points.
  Furthermore, when the non-convex constraint is quadratic the characterization
  is particularly simple.
  We also provide an extended formulation for a relaxation of the visible points
  when the non-convex constraint is a general polynomial.

  Finally, we give some conditions under which for a given set there is an
  inclusion-wise smallest set, in some predefined family of sets, whose reverse
  polars coincide.

  {\bf Keywords:} Separation problem, Visible points, Mixed-integer nonlinear
  programming, Reverse polar, Global optimization.
\end{abstract}

\section{Introduction}

The separation problem is a fundamental problem in
optimization~\cite{GroetschelLovaszSchrijver1993}.
Given a set $S \subseteq \mathbb{R}^n$ and a point $\xlp$, the separation
problem is
\begin{quote}
  Decide if $\xlp \in \clconv(S)$ or find an inequality $\alpha x \leq
  \beta$ that separates $\xlp$ from $\clconv(S)$.
\end{quote}

Algorithms to solve optimization problems, especially those based on solving
relaxations, such as branch and bound, need to deal with the separation problem.
Consider, for example, solving a mixed integer linear problem via branch and
bound~\cite[Section 9.2]{ConfortiCornuejolsZambelli2014}.
The solution to the linear relaxation plays the role of $\xlp$, while
a relaxation based on a subset of the constraints is used as $S$ for the
separation problem, see~\cite[Chapter 6]{ConfortiCornuejolsZambelli2014}.

The separation problem can be rephrased in terms of the \emph{reverse
polar}~\cite{Balas1998,Zaffaroni2008} of $S$ at $\xlp$, defined as
\[
  S^{\xlp} = \{ \alpha \in \mathbb{R}^n: \alpha^\T (x - \xlp) \geq 1, \forall x
  \in S\}.
\]
The elements of $S^{\xlp}$ are the normals of the hyperplanes that separate
$\xlp$ from $\clconv(S)$.
Hence, the separation problem can be stated equivalently as
\begin{quote}
  Decide if $S^{\xlp}$ is empty or find an element from it.
\end{quote}

The point of departure of the present work is the following observation.
\begin{observation} \label{intro:observation}
  If there is a set $V$ such that $(S \cap V)^{\xlp} = S^{\xlp}$, then, as far
  as the separation problem is concerned, the feasible region can be regarded as
  $S \cap V$ instead of $S$.\\
  %
\end{observation}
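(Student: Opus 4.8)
The plan is to reduce everything to the already-established equivalence between the separation problem and the reverse polar. By the discussion preceding the statement, solving the separation problem for a set $T$ at $\xlp$ amounts to deciding whether $T^{\xlp}$ is empty and, if not, exhibiting an element of it, since the elements of $T^{\xlp}$ are exactly the normals of the hyperplanes separating $\xlp$ from $\clconv(T)$. Applying this with $T = S$ and with $T = S \cap V$, the two separation problems are governed entirely by the two sets $S^{\xlp}$ and $(S\cap V)^{\xlp}$. Hence the statement will follow once I argue that these two reverse polars encode the same decision and the same admissible cuts, which is exactly the content of the hypothesis $(S\cap V)^{\xlp} = S^{\xlp}$.

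First I would record the one inclusion that holds unconditionally: since $S \cap V \subseteq S$, every $\alpha$ satisfying $\alpha^\T(x-\xlp) \geq 1$ for all $x \in S$ also satisfies it for all $x \in S \cap V$, so $S^{\xlp} \subseteq (S\cap V)^{\xlp}$. Thus the substantive part of the hypothesis is the reverse inclusion $(S\cap V)^{\xlp} \subseteq S^{\xlp}$. From the assumed equality it is immediate that $S^{\xlp} = \emptyset$ if and only if $(S\cap V)^{\xlp} = \emptyset$, so the \emph{decision} outcome of the separation problem is identical whether one uses $S$ or $S \cap V$.

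Next I would address validity, which is where the inclusion $(S\cap V)^{\xlp} \subseteq S^{\xlp}$ does real work. Take any separator produced by working with the restricted region, i.e.\ any $\alpha \in (S\cap V)^{\xlp}$; it yields the cut $\alpha^\T x \geq 1 + \alpha^\T \xlp$, which separates $\xlp$ from $S \cap V$ but, a priori, need not be valid for the larger set $S$. The hypothesis guarantees $\alpha \in S^{\xlp}$, so $\alpha^\T(x - \xlp) \geq 1$ holds for all $x \in S$ as well, and the same cut is valid for $S$. Conversely, any cut one would generate from $S$ is also a cut for $S \cap V$ by the trivial inclusion. Therefore the two sets are interchangeable for the separation of $\xlp$: they give the same verdict, and every cut obtained from $S \cap V$ remains valid for $S$.

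I expect the main subtlety to be conceptual rather than technical: recognising that, although $S \cap V \subseteq S$ makes $S \cap V$ a strictly tighter (and hence potentially invalid) relaxation in general, the equality of reverse polars is precisely the condition that rules out the generation of cuts valid only for $S \cap V$ but not for $S$. No nontrivial computation is required once the reverse-polar reformulation of the separation problem is in place.
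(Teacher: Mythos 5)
Your proof is correct and takes essentially the same route as the paper: the paper presents this observation as an immediate consequence of the reverse-polar reformulation of the separation problem, and where it is formalized later (Proposition~\ref{prop:minlp:generic}) the entire proof is the chain $\alpha \in (S \cap V)^{\xlp} = S^{\xlp}$, which is exactly the validity step at the heart of your argument. Your supplementary points---the unconditional inclusion $S^{\xlp} \subseteq (S \cap V)^{\xlp}$ from $S \cap V \subseteq S$, and the agreement of the emptiness decisions---are correct elaborations consistent with Lemma~\ref{lemma:ruys}.
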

%
%
A set $V$ such that $V^{\xlp} = S^{\xlp}$ will be called a \emph{generator} of
$S^{\xlp}$.
Intuitively, if a set $V$ is such that $V \cap S$ generates $S^{\xlp}$, that is,
if we can ensure that a cut valid for $V \cap S$ that separates $\xlp$ is also
valid for $S$, then $V$ should at least contain the points of $S$ that are
``near'' $\xlp$.
To formalize the meaning of ``near'' we use the concept of \emph{visible
points}~\cite{DeutschHundalZikatanov2013} of $S$ from $\xlp$, which are the
points $x \in S$ for which the segment joining $x$ with $\xlp$ only intersects
$S$ at $x$, see Definition~\ref{def:visible}.
In other words, they are the points of $S$ that can be ``seen'' from $\xlp$.
In Proposition~\ref{prop:visible_reverse} we show that the visible points are
a generator of $S^{\xlp}$.

As a motivation, we present an application of our results in the context of
nonlinear programming, which is treated in more detail in Section~\ref{sec:minlp}.
\begin{example} \label{intro:ex1}
  Consider the separation problem of $\xlp = (0,0)$ from $S = \{ x \in B : g(x)
  \leq 0\}$ where
  \begin{align*}
    B &= [-\tfrac{1}{2},3] \times [-\tfrac{1}{2},3],\\
    g(x_1,x_2) &= -x_1^2 x_2 + 5 x_1 x_2^2 - x_2^2 - x_2 -2 x_1 +2,
  \end{align*}
  as depicted in Figure~\ref{fig:ex1}.
  A standard technique for solving the separation problem for $S$ and $\xlp$ is
  to construct a convex underestimator of $g$ over $B$~\cite[Sections 6.1.2 and
  7.5.1]{Vigerske2013}.
  The quality of a convex underestimator depends on the bounds of the
  variables and tighter bounds yield tighter underestimators.
  As we will see (Proposition~\ref{prop:visible_reverse} and
  Lemma~\ref{lemma:characterization_visibility}), $R^{\xlp} = S^{\xlp}$ where
  \[
    R = \{ x \in B : g(x) = 0, \langle \nabla g(x), x \rangle \leq 0 \}.
  \]
  It is possible to show that $R \subseteq V$, where $V = [-\tfrac{1}{2},
  \tfrac{17}{10}] \times [-\tfrac{6}{25},\tfrac{3}{2}]$.
  Hence, by Corollary~\ref{cor:minlp:visibles_work}, $(V \cap S)^{\xlp}
  = S^{\xlp}$.
  This means that we can solve the separation problem over $\{ x \in V :
  g(x) \leq 0\}$ instead of $S$.
  Therefore, if we were to compute an underestimator of $g$, it could be
  computed over $V \subsetneq B$.
  \begin{figure}
    \centering
    \includegraphics[width=.4\linewidth]{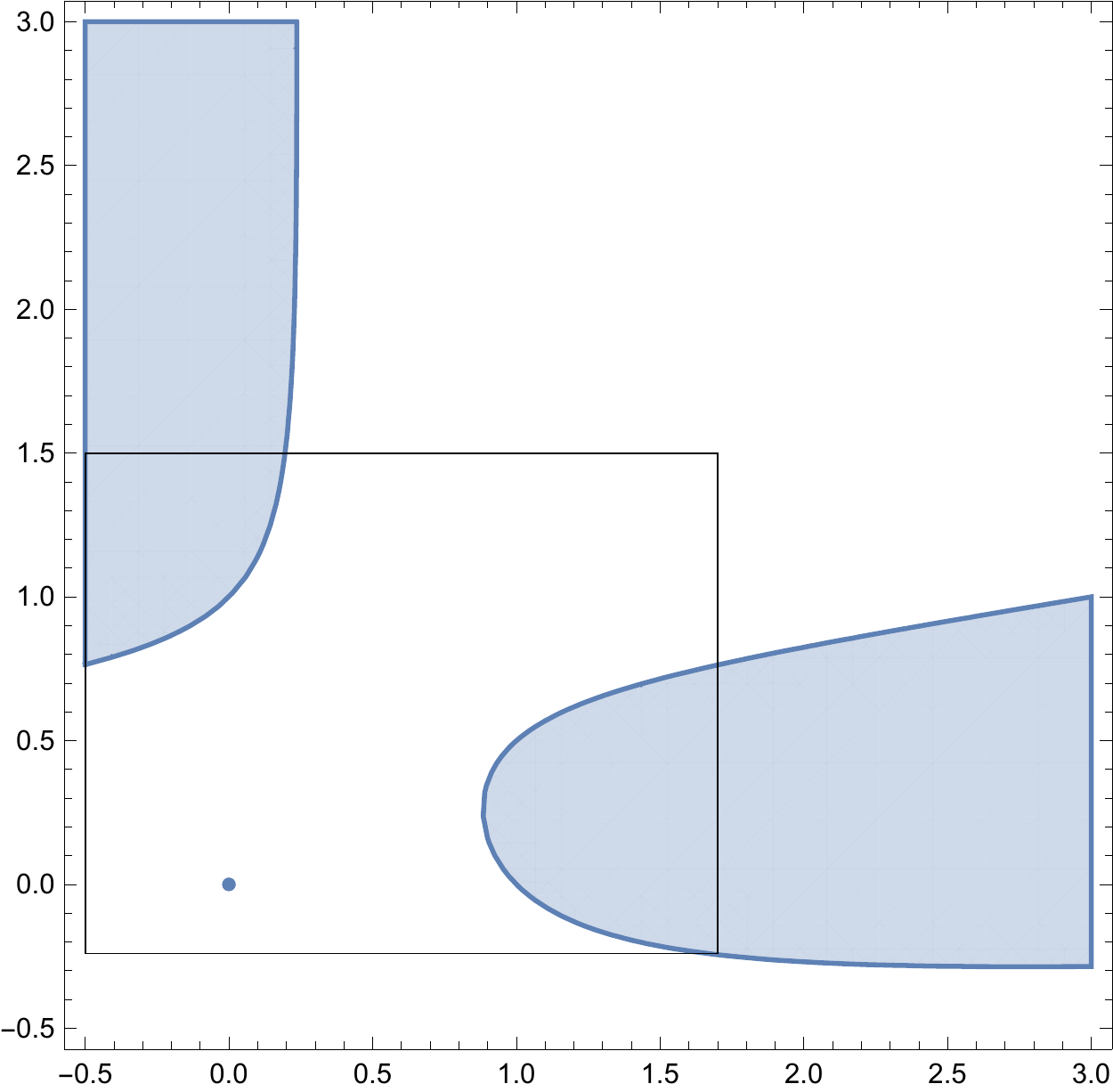}
    \caption{The feasible region $g(x) \leq 0$ and $\xlp = (0,0)$
      together with the box $V$.
    } \label{fig:ex1}
  \end{figure}
\end{example}

Methods for obtaining tighter bounds for mixed integer nonlinear
programming (MINLP) are of paramount importance.
Indeed, not only bound tightening procedures enhance the performance of MINLP
solvers, but also many algorithms for solving MINLPs require that all variables
are bounded~\cite{HamedMcCormick1993}.
We refer to the recent survey~\cite{PuranikSahinidis2017} for more information
on bound tightening procedures and its impact on MINLP solvers, and
to~\cite{BoukouvalaMisenerFloudas2016} for the practical importance of MINLP.

However, the technique that we introduce in this paper is \emph{not} a bound
tightening technique in the classic sense, i.e., the tighter bounds that might
be learned from $V$ are not valid for the original problem, but only for the
separation problem at hand.

We would like to point out that in~\cite{VenkatachalamNtaimo2016} a similar
idea --- to modify the separation problem --- is used in the context of
stochastic mixed integer programming.
The objective of the authors of~\cite{VenkatachalamNtaimo2016} is to speed-up
the solution of the separation problem.
In contrast, our objective is to produce tighter cutting planes for MINLP.

\paragraph{Contributions}
We show that for every closed set $S$, there exists an inclusion-wise smallest
closed convex set that generates $S^{\xlp}$ (Theorem~\ref{thm:smallest_convex}).
When $S$ is compact, there is an inclusion-wise smallest closed set that
generates $S^{\xlp}$ (Theorem~\ref{thm:smallest_closed}).
Furthermore, under some mild assumptions on $S$, we show that there is an
inclusion-wise smallest closed convex set $C$ such that $C \cap S$ generates
$S^{\xlp}$ (Theorem~\ref{thm:smallest_inter}).
We also show the existence of a generator, $V_S(\xlp)$, of $S^{\xlp}$ which is
more suitable for computations.

We apply our results to MINLP and give an explicit description of $V_S(\xlp)$
when $S = \{ x \in C : g(x) \leq 0\}$, where $C$ is a closed convex set
containing $\xlp$, and $g$ is continuous (Section~\ref{subsec:characterization_minlp}).
For the important case of quadratic constraints, i.e., when $g$ is a quadratic
function, we show that $V_S(\xlp)$ has a particularly simple expression
(Theorem~\ref{thm:qvis}).\\
For the case when $g$ is a general polynomial, we provide an extended
formulation for a relaxation of $V_S(\xlp)$ based on the theory of non-negative
univariate polynomials (Theorem~\ref{thm:poly_vis}).

\paragraph{Definitions and notation}
Given a set $S$, $\conv S, \cl S, \clconv S, \ext S$ represent the convex hull,
the closure, the closure of the convex hull and the extreme points of $S$,
respectively.
The \emph{extreme points} of a, not necessarily convex, set $S$ are the points
in $S$ that cannot be written as convex combination of others.
Given some set $S = \{ (x,y) : \ldots \}$, we use $\proj_x S$ to denote the
projection of $S$ to the $x$-space, that is, $\proj_x S = \{ x : \exists y,
(x,y) \in S\}$.
If $g : C \subseteq \mathbb{R}^n \to \mathbb{R}$ is a function and $D \subseteq
C$, then we denote by $g^{vex}_D$ a convex underestimator of $g$ over $D$.
When $g$ is convex, $\partial g(x)$ denotes the subdifferential of $g$ at $x$.\\
Given an interval $I \subseteq \mathbb{R}$ and an arbitrary set $A \subseteq
\mathbb{R}^n$ we denote by $IA$ the set $\{ \lambda x \st \lambda \in I, x \in
A \}$.
Likewise, for $x \in \mathbb{R}^n$, $Ix := \{ \lambda x \st \lambda \in I \}$.
Given an integer $d$, we denote by $\mathcal{S}^d_+$ the cone of positive
semi-definite matrices of size $d$.\\
Finally, we use interchangeably the dot product notation $\langle c, x \rangle$
and $c^\T x$.

\section{Visible points and the reverse polar}
\label{sec:reverse}

In this section we introduce the concept of visible points and reverse polar,
and state some basic properties about them, which we will use in the rest of the
paper.
The main result in this section is that the reverse polar of the visible points
of a set is the reverse polar of the set (Proposition~\ref{prop:visible_reverse}).

Unless stated otherwise, we will assume $\xlp = 0$.
This is without loss of generality, since we can always translate the set $S$ to
$S - \xlp$.
We start by restating the definition of reverse polar.
\begin{definition}
  Let $S \subseteq \mathbb{R}^n$ and $\xlp \in \mathbb{R}^n$.
  The \emph{reverse polar} of $S$ at $\xlp$ is
  \[
    S^{\xlp} = \{ \alpha \in \mathbb{R}^n : \alpha^\T (x - \xlp) \geq 1, \text{
    for all } x \in S\}.
  \]
\end{definition}

As stated in the introduction, the reverse polar contains all cuts that separate
$\xlp$ from $S$.
\begin{definition}
  Let $S, V \subseteq \mathbb{R}^n$ and $\xlp \in \mathbb{R}^n$.
  We say that $V$ is a \emph{generator} of $S^{\xlp}$ if and only if
  \[
    V^{\xlp} = S^{\xlp}
  \]
\end{definition}
%
%
\begin{definition} \label{def:visible}
  Let $S \subseteq \mathbb{R}^n$ be closed and $\xlp \notin S$.
  The set of \emph{visible points} of $S$ from $\xlp$ is
  \begin{align*}
    V_S(\xlp)
      &=
      \{ x \in S : (x + [0,1](\xlp - x)) \cap S = \{x\} \} \\
      &=
      \{ x \in S : (x + (0,1](\xlp - x)) \cap S = \emptyset \}.
  \end{align*}
  We denote $V_S(0)$ by $V_S$ and note that
  \[
    V_S = \{ x \in S : [0, 1]x \cap S = \{x\} \}
    = \{ x \in S : [0, 1)x \cap S = \emptyset \}.
  \]
\end{definition}

The following concept is, in some sense, the opposite of the visible points.
\begin{definition}
    Let $S \subseteq \mathbb{R}^n$ be closed.
    The \emph{shadow} of $S$ from 0 is
    \[
        \shw S = [1, \infty)S.
    \]
\end{definition}
The concept of shadow has also been called \emph{penumbra}~\cite[p.
22]{Rockafellar1970},\cite{TindWolsey1982,ConfortiWolsey2018} and \emph{aureole
closure}~\cite{Ruys1974}.
The following are some basic properties of the reverse polar.
\begin{lemma}{\cite[Property 9.2.2]{Ruys1974}} \label{lemma:ruys}
  Let $S, T \subseteq \mathbb{R}^n$.
  Then,
  \begin{enumerate}
    \item $S^0 = (\shw S)^0 = (\conv S)^0 = (\cl S)^0$.
    \item $S^0 = \emptyset$ if and only if $0 \in \clconv S$.
    \item $S \subseteq T$ implies $T^0 \subseteq S^0$.
    \item If $0 \notin \clconv S$, then $(S^0)^0 = \shw \clconv S$.
  \end{enumerate}
\end{lemma}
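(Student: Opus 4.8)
The plan is to dispatch the monotonicity and invariance properties directly from the definition, to obtain the emptiness criterion from strict separation, and to reserve the real work for the bipolar identity in item~4. For item~3, if $\alpha \in T^0$ then $\alpha^\T x \geq 1$ for every $x \in T$, a fortiori for every $x \in S \subseteq T$, so $\alpha \in S^0$; this $T^0 \subseteq S^0$ will be used freely. For item~1 I would prove each equality by the same two-sided pattern: the inclusions ``$\subseteq$'' follow from item~3 via $S \subseteq \shw S$, $S \subseteq \conv S$, $S \subseteq \cl S$, while the reverse inclusions come from the definition. Concretely, for $\alpha \in S^0$ and $y=\lambda x$ with $\lambda\geq 1$, $x\in S$, one has $\alpha^\T y=\lambda\alpha^\T x\geq\lambda\geq 1$; for a convex combination $y=\sum_i\lambda_i x_i$ of points of $S$, $\alpha^\T y=\sum_i\lambda_i\alpha^\T x_i\geq\sum_i\lambda_i=1$; and for $y\in\cl S$, taking $x_k\to y$ with $x_k\in S$ and passing to the limit in $\alpha^\T x_k\geq 1$ gives $\alpha^\T y\geq 1$. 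Composing the last two yields $(\clconv S)^0=S^0$, which I record for later. For item~2 we may therefore replace $S$ by $C:=\clconv S$: if $0\in C$ then any $\alpha\in C^0$ forces $0=\alpha^\T 0\geq 1$, so $C^0=\emptyset$; and if $0\notin C$, strict separation of the point $0$ from the closed convex set $C$ produces $\alpha$ with $c:=\inf_{x\in C}\alpha^\T x>0$, whence $\alpha/c\in C^0=S^0$.

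The substance is item~4, so assume $0\notin C:=\clconv S$, so that $S^0=C^0$ and $\shw\clconv S=\shw C$. The easy inclusion $\shw C\subseteq(C^0)^0$ is immediate: for $y=\lambda x$ with $\lambda\geq 1$, $x\in C$, and any $\alpha\in C^0$, $\alpha^\T y=\lambda\alpha^\T x\geq 1$. For the reverse inclusion I first need that $\shw C$ is closed and convex. Convexity follows by rewriting a convex combination $t\lambda_1 x_1+(1-t)\lambda_2 x_2$ as $\mu\bigl(\tfrac{t\lambda_1}{\mu}x_1+\tfrac{(1-t)\lambda_2}{\mu}x_2\bigr)$ with $\mu=t\lambda_1+(1-t)\lambda_2\geq 1$. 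Closedness is exactly where $0\notin C$ enters: given $\mu_k x_k\to z$ with $\mu_k\geq 1$, $x_k\in C$, the separator $\alpha$ from item~2 gives $\alpha^\T(\mu_k x_k)=\mu_k\alpha^\T x_k\geq\mu_k$, so convergence of the left-hand side bounds $\{\mu_k\}$; passing to a subsequence $\mu_k\to\mu\geq 1$ forces $x_k\to z/\mu\in C$, and hence $z=\mu\,(z/\mu)\in\shw C$.

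Finally, I would prove $(C^0)^0\subseteq\shw C$ contrapositively. Given $y\notin\shw C$, strict separation from the closed convex set $\shw C$ yields $\beta$ with $\beta^\T y<c:=\inf_{x\in\shw C}\beta^\T x$. Because $\shw C$ is stable under scaling by $[1,\infty)$, a point $x\in C$ with $\beta^\T x<0$ would drive $\beta^\T(\lambda x)\to-\infty$ along $\shw C$, contradicting the finite infimum; hence $\beta^\T x\geq 0$ on $C$ and $c\geq 0$. The main obstacle is the normalization, since $c$ may equal $0$ and then $\beta$ cannot simply be rescaled into $C^0$. I resolve this by cases. If $c>0$, then $\beta/c\in C^0$ and $(\beta/c)^\T y<1$, so $y\notin(C^0)^0$. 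If $c=0$, then $\beta^\T y<0$; taking any $\gamma\in C^0$ (which exists by item~2) and setting $\alpha_t=\gamma+t\beta$, one has $\alpha_t^\T x=\gamma^\T x+t\beta^\T x\geq 1$ on $C$ for every $t\geq 0$, so $\alpha_t\in C^0$, while $\alpha_t^\T y=\gamma^\T y+t\beta^\T y\to-\infty$; choosing $t$ large gives $\alpha_t^\T y<1$ and again $y\notin(C^0)^0$. Combining the two inclusions proves $(S^0)^0=\shw\clconv S$, completing the plan.
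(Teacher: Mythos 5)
Your proof is correct. Note that the paper itself offers no proof of this lemma at all --- it is imported verbatim from Ruys \cite[Property 9.2.2]{Ruys1974} --- so there is no internal argument to compare against; your self-contained derivation is the natural one and would serve as a valid replacement for the citation. You correctly locate all the substance in item~4, and the two delicate points there are both handled properly: first, the closedness of $\shw C$ for $C = \clconv S$, which genuinely needs $0 \notin C$ (your bound $\alpha^\T(\mu_k x_k) \geq \mu_k$ via a separator from item~2 is exactly the right mechanism to keep the scalars $\mu_k$ bounded); and second, the normalization obstacle when the separating functional $\beta$ for $y \notin \shw C$ has $\inf_{x \in \shw C}\beta^\T x = 0$, which you resolve with the perturbation $\alpha_t = \gamma + t\beta$ along a $\gamma \in C^0$ guaranteed by item~2 --- this is the step a careless writeup would miss, since $\beta$ alone cannot be rescaled into $C^0$. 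The only omission is the degenerate case $S = \emptyset$: there $S^0 = \mathbb{R}^n$, both $(S^0)^0$ and $\shw\clconv S$ are empty, and item~4 holds trivially, but your separation argument tacitly assumes $C \neq \emptyset$ (otherwise the infimum $c$ is $+\infty$ and the case split on $c$ breaks down). One line disposing of that case would make the argument airtight; everything else is sound.
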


We will now show that $V_S$ is a generator of $S^0$.
To this end, we need the following lemma, which says that the shadow of what can
be seen of a set is the same as the shadow of the whole set.
Likewise, what can be seen of a set is the same as what can be seen of the
shadows of the set.
\begin{lemma} \label{lemma:shadow_visible_invariant}
  Let $S \subseteq \mathbb{R}^n$ be a closed set such that $0 \notin S$.
  Then, $\shw V_S = \shw S$ and $V_{\shw S} = V_S$.
\end{lemma}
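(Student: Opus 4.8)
The plan is to establish the two equalities separately, since the first requires a genuine geometric argument while the second is a direct set-theoretic manipulation. I would begin with $\shw V_S = \shw S$. Because $V_S \subseteq S$ and $\shw$ is clearly monotone under inclusion, the inclusion $\shw V_S \subseteq \shw S$ is immediate; the content is in the reverse direction, for which I would prove the stronger statement $S \subseteq \shw V_S$. Given $x \in S$, I would consider the set of scalars $\Lambda = \{ t \in [0,1] : tx \in S \}$. This set is closed (it is the preimage of the closed set $S$ under the continuous map $t \mapsto tx$, intersected with $[0,1]$) and nonempty, since $1 \in \Lambda$. Crucially, $0 \notin S$ forces $0 \notin \Lambda$, so $t^{*} := \min \Lambda$ exists and is strictly positive. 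Minimality says $tx \notin S$ for all $t \in [0,t^{*})$, i.e. $[0,1)(t^{*}x) \cap S = \emptyset$, so $v := t^{*}x$ is a visible point; and $x = (1/t^{*})v$ with $1/t^{*} \geq 1$ exhibits $x \in \shw V_S$. With $S \subseteq \shw V_S$ in hand I would close the loop using monotonicity together with the idempotence $\shw \shw V_S = \shw V_S$ (which holds because $[1,\infty)[1,\infty) = [1,\infty)$): then $\shw V_S \subseteq \shw S \subseteq \shw \shw V_S = \shw V_S$, giving equality throughout.

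For $V_{\shw S} = V_S$, I would first note that $\shw S$ is closed and avoids $0$, so that the definition of $V_{\shw S}$ applies. Closedness follows because $0 \notin S$ and $S$ closed imply $S$ lies outside some ball $B(0,\delta)$; this bounds the scaling factors $\lambda_k$ in any convergent sequence $\lambda_k y_k$ (as $\lambda_k \delta \leq |\lambda_k y_k|$ stays bounded), allowing passage to a limit inside $[1,\infty) \times S$. I would then prove the two inclusions directly. For $V_S \subseteq V_{\shw S}$: a point $x \in V_S$ lies in $S \subseteq \shw S$, and if some $sx \in \shw S$ with $s \in [0,1)$, writing $sx = \lambda y$ with $\lambda \geq 1$, $y \in S$ yields $y = (s/\lambda)x \in [0,1)x \cap S$ (note $s/\lambda \in [0,1)$), contradicting $x \in V_S$. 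For $V_{\shw S} \subseteq V_S$: since $S \subseteq \shw S$, the condition $[0,1)x \cap \shw S = \emptyset$ already forces $[0,1)x \cap S = \emptyset$; moreover, writing $x = \lambda y$ with $\lambda \geq 1$, $y \in S$, the case $\lambda > 1$ would give $y = (1/\lambda)x \in [0,1)x \cap \shw S = \emptyset$, which is impossible, so $\lambda = 1$ and $x = y \in S$.

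The only step with real content is the existence of the nearest visible point $t^{*}x$ along the segment: everything hinges on $S$ being closed (to guarantee $\Lambda$ attains its minimum) and on $0 \notin S$ (to guarantee $t^{*} > 0$), and these same two hypotheses reappear when justifying that $\shw S$ is closed. I expect this to be the main obstacle, in the sense that it is the one place a limiting/compactness argument is genuinely used; the remaining verifications for $V_{\shw S} = V_S$ are routine, requiring only that the degenerate scalar cases ($s = 0$ and $\lambda = 1$) be handled, which they are by the observation that $0 \notin \shw S$.
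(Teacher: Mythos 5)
Your proof is correct and follows essentially the same route as the paper's: the core step is identical (the minimum $t^{*}=\min\{t : tx\in S\}$ exists by closedness of $S$, is positive because $0\notin S$, and yields the visible point $t^{*}x$ witnessing $\shw V_S \supseteq \shw S$), and the second equality is settled by the same rescaling manipulations, with your detour through $S\subseteq \shw V_S$ plus idempotence of $\shw$ being only a cosmetic repackaging of the paper's direct computation $y=(\lambda/\mu_0)(\mu_0 x)$. If anything you are more careful than the paper, which asserts $V_S\subseteq V_{\shw S}$ with a bare ``clearly'' (even though visibility is not monotone under set inclusion in general, so your spelled-out blocking argument is genuinely needed) and never verifies that $\shw S$ is closed before applying the definition of visible points.
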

\begin{proof}
  First we prove that $\shw V_S = \shw S$.
  Clearly, $\shw V_S \subseteq \shw S$.

  Let $y \in \shw S$, then $y = \lambda x$ with $x \in S$, $\lambda \geq 1$.
  Let $I = \{ \mu \geq 0: \mu x \in S \}$ and $\mu_0 = \min I$.
  The minimum exists since $I$ is closed and not empty as $S$ is closed and
  $1\in I$, respectively.
  From $1 \in I$, we deduce $\mu_0 \leq 1$, and from $0 \notin S$,
  $\mu_0 > 0$.
  Hence, $\mu_0 x \in V_S$ and $y = \tfrac{\lambda}{\mu_0} (\mu_0 x)
  \in \shw V_S$, since $\tfrac{\lambda}{\mu_0} \geq 1$.

  Now we prove that $V_{\shw S} = V_S$.
  Clearly, $S \subseteq \shw S$ implies that $V_S \subseteq V_{\shw S}$.

  Let $x_0 \in V_{\shw S}$.
  Then $x_0 \in \shw S$, so there exists $\lambda \geq 1$ and $x \in S$
  such that $x_0 = \lambda x$.
  Note that $\lambda = 1$, since otherwise, $\frac{1}{\lambda} x_0 = x \in
  S \subseteq \shw S$ which cannot be as $x_0$ is visible.
  Thus, $x_0 \in V_S$.
\end{proof}

\begin{proposition} \label{prop:visible_reverse}
  Let $S \subseteq \mathbb{R}^n$ be a closed set.
  Then,
  \[
    (S \cap V_S)^0 = V_S^0 = S^0.
  \]
\end{proposition}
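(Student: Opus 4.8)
The plan is to observe first that the leftmost equality is essentially free, and then to obtain the substantive equality $V_S^0 = S^0$ by combining the two lemmas that immediately precede the statement.

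Since $V_S \subseteq S$ by the very definition of the visible points, we have $S \cap V_S = V_S$, and hence $(S \cap V_S)^0 = V_S^0$ with no work at all. It therefore remains to prove $V_S^0 = S^0$. Throughout I assume $0 \notin S$, as required for $V_S$ to be defined.

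For this second equality I would route everything through the shadow. By Lemma~\ref{lemma:ruys}(1), taking the reverse polar is unchanged when a set is replaced by its shadow, so $V_S^0 = (\shw V_S)^0$ and $S^0 = (\shw S)^0$. By Lemma~\ref{lemma:shadow_visible_invariant}, the shadow of the visible points equals the shadow of the whole set, i.e.\ $\shw V_S = \shw S$. Chaining these identities gives
\[
  V_S^0 = (\shw V_S)^0 = (\shw S)^0 = S^0,
\]
which completes the proof. Note that this argument is insensitive to whether $S^0$ is empty: if $0 \in \clconv S$ then all of the sets are empty by Lemma~\ref{lemma:ruys}(2), and the chain of equalities still holds.

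The reassuring feature of this plan is that essentially no obstacle is left, since all the real work has been pushed into Lemma~\ref{lemma:shadow_visible_invariant}. Had that lemma not been available, the crux would have been the nontrivial inclusion $V_S^0 \subseteq S^0$; the easy direction $S^0 \subseteq V_S^0$ follows at once from $V_S \subseteq S$ and the monotonicity in Lemma~\ref{lemma:ruys}(3). For the reverse inclusion one would take $\alpha \in V_S^0$ and an arbitrary $y \in S$, select the smallest $\mu_0 \in (0,1]$ with $\mu_0 y \in S$ (so that $\mu_0 y$ is visible), and combine $\alpha^\T(\mu_0 y) \geq 1$ with $1/\mu_0 \geq 1$ to conclude $\alpha^\T y = (1/\mu_0)\,\alpha^\T(\mu_0 y) \geq 1$; this is precisely the scaling computation already carried out inside the shadow lemma.
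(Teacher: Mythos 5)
Your proposal is correct and takes essentially the same route as the paper: the first equality from $V_S \subseteq S$, and then the chain $V_S^0 = (\shw V_S)^0 = (\shw S)^0 = S^0$, with the outer equalities from Lemma~\ref{lemma:ruys} and the middle one from Lemma~\ref{lemma:shadow_visible_invariant}. The only cosmetic difference is the edge case: the paper dispatches $0 \in S$ directly (there $V_S = \{0\}$ under the definition's first formula, so all three reverse polars are empty), whereas you exclude it by hypothesis, which is harmless since your observation about $0 \in \clconv S$ covers the remaining empty-polar situations.
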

\begin{proof}
  The first equality just comes from the fact that $V_S \subseteq S$.

  If $0 \in S$, then the equality holds as all the sets are empty.
  Otherwise, the equality follows from $V_S^0 = (\shw V_S)^0 = (\shw S)^0
  = S^0$, where the first and last equalities are by Lemma~\ref{lemma:ruys} and the
  middle one, by Lemma~\ref{lemma:shadow_visible_invariant}.
\end{proof}

\section{The smallest generators}
\label{sec:smallest}

\subsection{Motivation}

In the previous section we showed that there is a set $U \subseteq S$ such that
$(U \cap S)^0 = S^0$, namely, $U = V_S$.
This set can be used to improve separation routines as was shown already in
Example~\ref{intro:ex1}.
We will come back to applications of the visible points to separation in the
next section.

The topic of this section is motivated by the following example, where the set
$V_S$ is much larger than the smallest generator.
\begin{figure}[!h]
  \centering
  \includegraphics[width=.3\linewidth]{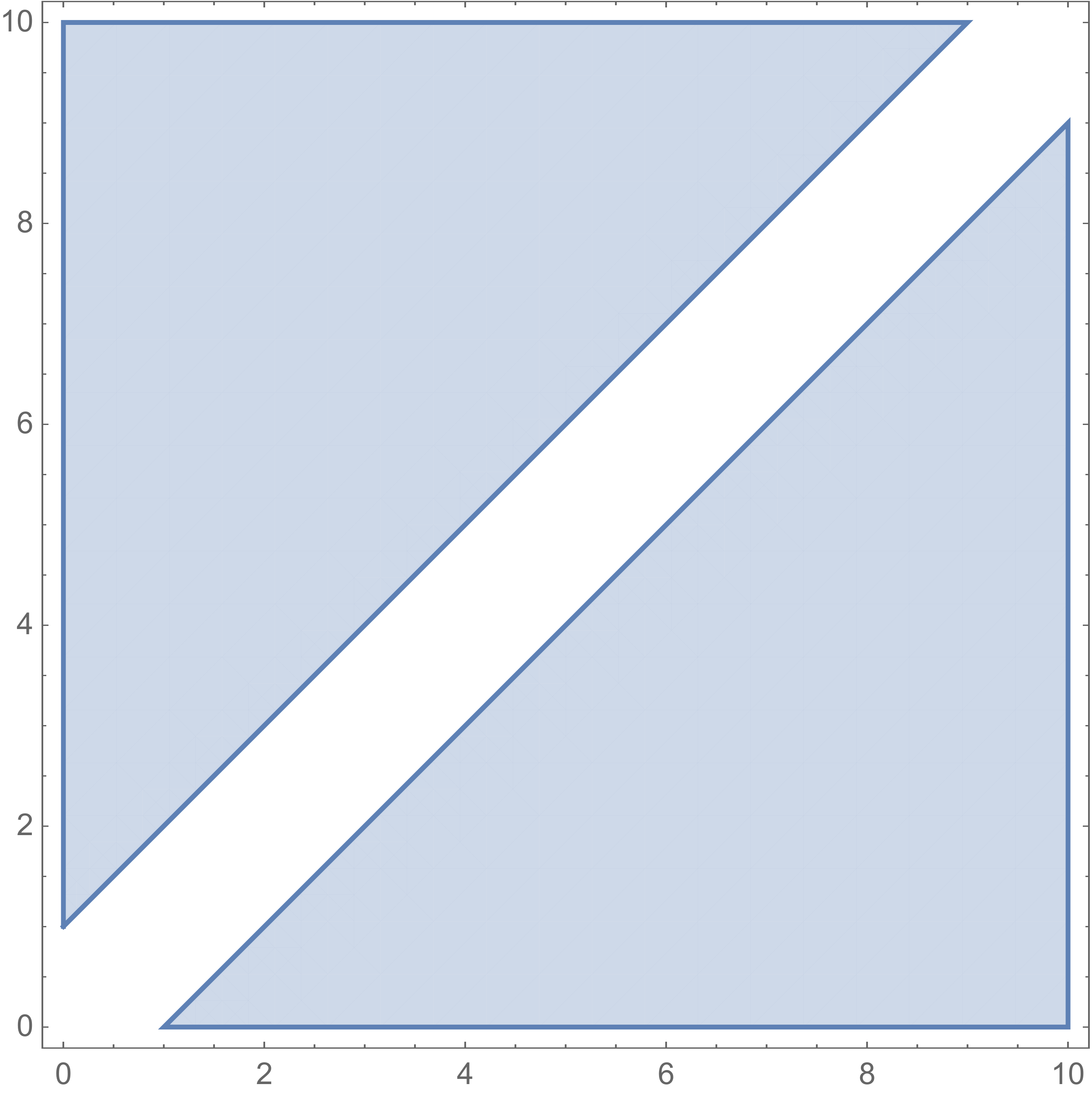}
  \includegraphics[width=.3\linewidth]{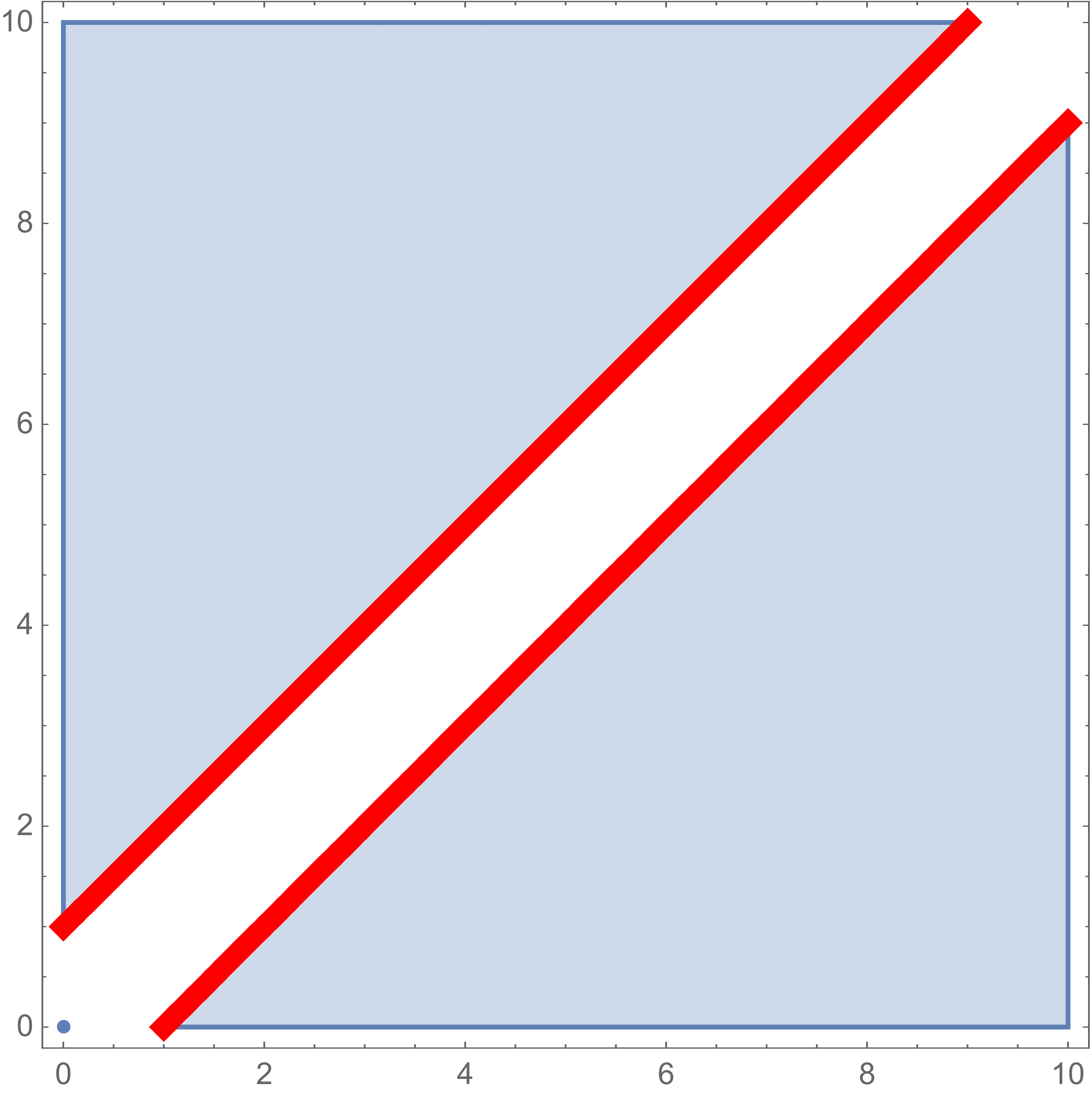}
  \includegraphics[width=.3\linewidth]{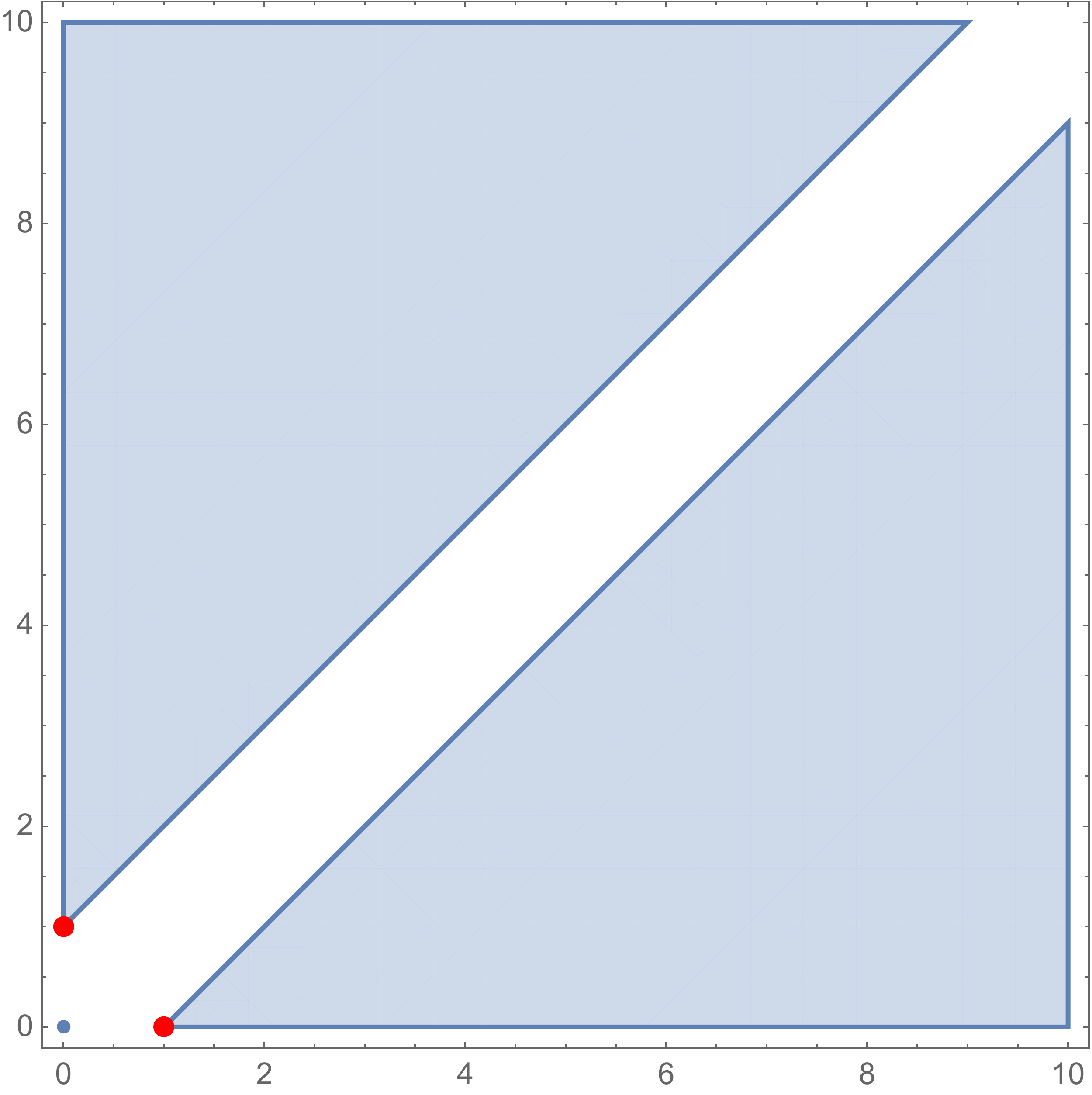}
  \caption{The region $S$.
    In the middle picture $V_S$ are the points described by the thick red
    line.
    In the right picture the red points form the smallest set $V$ such that $V^0
    = S^0$.
  }
  \label{fig:ex2}
\end{figure}
\begin{example}
  Consider the constrained set $S= \{(x_1, x_2) \in \mathbb{R}_+^2 \st (x_1
  - x_2)^2 \geq 1 \}$ depicted in Figure~\ref{fig:ex2}.
  The visible points are the lines $x_2 = x_1 + 1$ and $x_2 = x_1 -1$
  intersected with the first orthant.
  However, it is not hard to see that $V = \{(0,1), (1,0)\}$ is the smallest
  \emph{closed} generator of $S^0$.
\end{example}
This example motivates the following question.
\begin{question} \label{q:smallest}
  What is, if any, the smallest closed set $U$ such that $U^0 = S^0$?
\end{question}
The reason we restrict to generators that are closed sets is to avoid
representation issues.
For example, if $S$ is the ball of radius 1 centered at $(2,0)$, then
Theorem~\ref{thm:qvis} implies that the left arc joining $(2,1)$ and $(2,-1)$ generates
$S^0$.
However, the rational points on this arc also generate $S^0$ and the smallest
set generating $S^0$ does not exist.
In order to avoid such issues, we concentrate on closed generators.

As can be seen from simple examples, such as $S = \mathbb{R}_+ \times \{1\}$ for
which every $a \geq 0$ defines the generator $(\{0\} \cup [a,\infty)) \times
\{1\}$, the smallest closed generator must not exist.
However, a smallest closed convex generator might exist and so we ask the
following question.
\begin{question} \label{q:smallest_convex}
  What is, if any, the smallest closed convex generator of $S^0$?
\end{question}

We are mainly interested in applying our results to the separation problem, as
already explained in the introduction.
In that case, the set $S$ usually looks like $S = C \cap F$, where $C$ is
a convex set and $F$ is the sublevel set of some non-convex function, see the
next section.
In this context, replacing $C$ by a smaller convex set might be beneficial for
the separation problem (see Example~\ref{ex:quad}).
Thus, it is also natural to consider the following question.
\begin{question} \label{q:inter}
  What is, if any, the smallest closed convex set $U$ such that $S \cap U$
  generates $S^0$?
\end{question}

The last two questions are not the same.
Informally, $S$ is only used to define $S^0$ in Question~\ref{q:smallest_convex}, and so
any other set $T$ such that $T^0 = S^0$ can be used to formulate the question.
For instance, we can assume without loss of generality that $S$ is closed and
convex, since Lemma~\ref{lemma:ruys} implies that $(\clconv S)^0 = S^0$.
In contrast, in Question~\ref{q:inter} we are asking for the smallest generator
contained in $S$.

As we will see, the answer to Question~\ref{q:smallest_convex} is that $\clconv
V_{\clconv S}$ is the smallest closed convex generator of $S^0$.
However, the next two examples show that Question~\ref{q:inter} is a bit more delicate.

The first example shows that, in general, there is no unique smallest closed
convex set $U$ such that $(S \cap U)^0 = S^0$.
\begin{example}
  Let $S = \{ (1,0), (0,1), (-1,0), (0,-1) \}$.
  Since $0 \in \conv S$, $S^0 = \emptyset$.

  Clearly $V = \{0\} = V_{\conv S}$ is the smallest closed convex set such that
  $V^0 = \emptyset$.
  However, $S \cap V = \emptyset$, which implies that $(S \cap V)^0
  = \mathbb{R}^2 \neq S^0$.
  Furthermore, $U_1 = \{ (\lambda, 0) \st \lambda \in [-1,1] \}$ and $U_2 = \{
  (0, \lambda) \st \lambda \in [-1,1] \}$ are both closed convex and $(U_i \cap
  S)^0 = S^0$.
  Since $U_1 \not\subseteq U_2$ and $U_2 \not\subseteq U_1$ we conclude that
  there is no smallest closed convex set $U$ such that $(U \cap S)^0 = S^0$.
\end{example}

However, we cannot even expect to find a minimal closed convex set $U$ such that
$(S \cap U)^0 = S^0$.
\begin{example}
  Let $S = \{ (0,1) \} \cup \{(\lambda,2) \st \lambda \geq 0\}$.
  We have $S^0 = \{ \alpha \st \alpha_1 \geq 0, \alpha_2 \geq 1 \}$.

  Indeed, $(0,1) \in S$ implies that $\alpha_2 \geq 1$.
  If $\alpha_1 < 0$ for some $\alpha \in S^0$, then there is a large enough
  $\lambda$ such that $\lambda \alpha_1 + 2 \alpha_2 < 1$ and $(\lambda, 2) \in
  S$.
  On the other hand, if $\alpha_1 \geq 0$ and $\alpha_2 \geq 1$, then $\alpha_1
  x_1 + \alpha_2 x_2 \geq 1$ for every $(x,y) \in S$.

  Let $T_M = \{ (0,1) \} \cup \{(\lambda,2) \st \lambda \geq M\}$ and $U_M
  = \clconv( T_M)$.
  The same argument as above shows that $(U_M \cap S)^0 = T_M^0 = S^0$.
  Notice that any $U$ with $(U \cap S)^0 = S^0$ must contain a sequence
  $\lambda_n \to \infty$ such that $(\lambda_n,2) \in S$.
  Thus, any minimal $U$, if it exists, must be of the form $U_M$ for some $M
  \geq 0$.

  It is clear that $U_{M_1} \subseteq U_{M_2}$ if and only if $M_1 > M_2$ and
  $\bigcap_{M > 0} U_M = \{(\lambda,1) \st \lambda \geq 0\}$.
  However, $S \cap \{(\lambda,1) \st \lambda \geq 0\} = \{(0,1)\}$ and
  $\{(0,1)\}^0 \neq S^0$.
  Therefore, there is no minimal $U$.

  On the other hand, $V = \{(\lambda,1) \st \lambda \geq 0\} = V_{\clconv S}$ is
  the smallest closed convex set such that $V^0 = S^0$.
\end{example}

However, these are the only ``pathological cases''.
Indeed, as we will see, if $\conv(S)$ is closed (e.g. when $S$ is compact) and
$0 \notin \conv S$, (i.e., $S^0 \neq \emptyset$), then $\clconv V_{\conv S}$ is
the smallest closed convex set such that $\clconv V_{\conv S} \cap S$ generates
$S^0$.
\begin{remark} \label{rmk:example_deutsch}
  The closure operations are needed because, in general, $V_S$ and $\conv V_S$
  are not closed, even when $S$ is convex and compact.
  Indeed, it is shown in~\cite[Example 15.5]{DeutschHundalZikatanov2013} that
  for
  \[
    S := (1,0,0) +
    \cone\{(1,\alpha,\beta) \st \alpha^2 + (\beta - 1)^2 \leq 1\},
  \]
  $V_S$ is open.
  The authors show that the points $(2, \sin(t), 1 + \cos(t))$ are visible for
  $t \in (0, \pi)$, but the limit when $t$ approaches $\pi$, (2,0,0), is
  not.
  The remark follows from a modification of this example so that $S$ is compact,
  e.g., by intersecting it with $[0,3]\times\mathbb{R}^2$.
\end{remark}

\subsection{Preliminaries}
Here we collect a few lemmata that we are going to need in order to answer
Questions~\ref{q:smallest},~\ref{q:smallest_convex}~and~\ref{q:inter}.
\begin{lemma}{\cite[Proposition 15.19]{DeutschHundalZikatanov2013}}
  \label{lemma:deutsch}
  Let $S$ be a closed convex set such that $0 \notin S$.
  If $x \in V_S$ is a strict convex combination of $x_1, \ldots x_m \in S$, then
  $x_1, \ldots x_m \in V_S$.
\end{lemma}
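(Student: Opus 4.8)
The plan is to prove the statement by induction on the dimension $n$, the engine being a single separating functional obtained from the visibility of $x$. Write $x = \sum_{i=1}^m \lambda_i x_i$ with all $\lambda_i > 0$, $\sum_i \lambda_i = 1$ and $x_i \in S$. Since $x \in V_S$, the half-open segment $[0,1)x$ misses $S$, so $[0,1)x$ and $S$ are disjoint nonempty convex sets and hence can be separated: there is $c \neq 0$ with $c^\T z \le c^\T y$ for every $z \in [0,1)x$ and every $y \in S$. Evaluating at $z = 0$ gives $c^\T y \ge 0$ for all $y \in S$, and in particular $c^\T x \ge 0$, so two cases arise.

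In the non-degenerate case $c^\T x > 0$, the supremum of $c^\T z$ over $z \in [0,1)x$ equals $c^\T x$, so $c$ supports $S$ at $x$, i.e. $c^\T y \ge c^\T x$ for all $y \in S$. Averaging this over the representation $x = \sum_i \lambda_i x_i$ yields $c^\T x = \sum_i \lambda_i c^\T x_i \ge c^\T x$ with equality, and since every $\lambda_i > 0$ this forces $c^\T x_i = c^\T x$ for all $i$. Now if some $x_j$ were not visible, there would be $t \in [0,1)$ with $t x_j \in S$; then $t\, c^\T x = t\, c^\T x_j = c^\T(t x_j) \ge c^\T x$, and $c^\T x > 0$ gives $t \ge 1$, a contradiction. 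Hence all $x_i \in V_S$, and this case is settled without recourse to the induction hypothesis.

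The remaining case $c^\T x = 0$ is the one I expect to be the main obstacle: here $c$ supports $S$ along the hyperplane through the origin $H := \ker c$, and no supporting hyperplane at $x$ strictly separates $0$ (this genuinely occurs, e.g. when $S$ is the convex region to the right of a parabola and $x$ is its apex, so a one-shot supporting-functional argument cannot work). I would handle it by dimension reduction. From $c^\T y \ge 0$ on $S$ and the averaging identity $\sum_i \lambda_i c^\T x_i = c^\T x = 0$ with $\lambda_i > 0$ and $c^\T x_i \ge 0$, every $x_i$ lies in $H$, as does $x$. Set $S' := S \cap H$, a closed convex subset of the subspace $H \cong \mathbb{R}^{n-1}$ with $0 \notin S'$. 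Since $[0,1)x \cap S = \emptyset$ we still have $x \in V_{S'}$, and $x = \sum_i \lambda_i x_i$ is a strict convex combination of points of $S'$, so the induction hypothesis gives $x_i \in V_{S'}$. Finally, because $H$ is a linear subspace containing $0$, the segment $[0,1)x_i$ lies in $H$, whence $[0,1)x_i \cap S = [0,1)x_i \cap S' = \emptyset$ and therefore $x_i \in V_S$. The base case $n = 1$ is immediate: there $c^\T x = 0$ with $c \neq 0$ would force $x = 0 \notin S$, so only the non-degenerate case can occur and the averaging argument finishes the proof.
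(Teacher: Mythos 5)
Your proof is correct, and every step checks out: separating the disjoint convex sets $[0,1)x$ and $S$ yields $c \neq 0$ with $c^\T y \geq 0$ on $S$; in the case $c^\T x > 0$ the limit $t \to 1$ legitimately upgrades $c$ to a functional supporting $S$ at $x$, and the averaging-plus-rescaling argument is sound; in the degenerate case $c^\T x = 0$ the averaging correctly places $x$ and all the $x_i$ in $H = \ker c$, the induction hypothesis applies to $S \cap H$ in dimension $n-1$, and the observation that $[0,1)x_i \subseteq H$ (because $H$ is a linear subspace through the origin) correctly transfers visibility from $S \cap H$ back to $S$; the base case is also right. Be aware, though, that the paper contains no proof of this lemma to compare against: it is imported verbatim from \cite[Proposition 15.19]{DeutschHundalZikatanov2013}. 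Relative to the standard argument, your route is heavier than necessary, because the statement admits a one-step proof with no separation, no induction, and no degenerate case: if $t x_j \in S$ for some $t \in [0,1)$, then $t > 0$ (otherwise $0 \in S$), and setting $\rho = t/(t + \lambda_j(1-t)) \in (0,1)$ one checks that $\rho x = \frac{\rho \lambda_j}{t}\,(t x_j) + \sum_{i \neq j} \rho \lambda_i x_i$ is a convex combination of points of $S$ (the weights are nonnegative and sum to one, since $t + \lambda_j(1-t) = \lambda_j + t(1-\lambda_j)$), so $\rho x \in S$ with $\rho < 1$, contradicting $x \in V_S$. Your approach does buy something the short proof does not --- the supporting functional $c$ with $c^\T y \geq c^\T x$ on $S$ in the nondegenerate case, and your parenthetical parabola example correctly shows that the degenerate case is genuinely unavoidable for any supporting-functional strategy --- but the direct rescaling argument is shorter, uses only convexity ($S$ need not even be closed), and is presumably the proof in the cited reference.
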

This result immediately implies the following two lemmata.
\begin{lemma} \label{lemma:char_extreme_visible}
  Let $S \subseteq \mathbb{R}^n$ be a closed convex set such that $0 \notin S$.
  Then, $\ext V_S = V_S \cap \ext S$.
\end{lemma}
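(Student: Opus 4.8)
The plan is to prove the two inclusions separately, using only the definition of extreme points (a point of the set that is not a strict convex combination of other points of the set), the containment $V_S \subseteq S$, and Lemma~\ref{lemma:deutsch}.

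For the inclusion $V_S \cap \ext S \subseteq \ext V_S$, I would argue directly and this direction needs nothing beyond $V_S \subseteq S$. Take $x \in V_S \cap \ext S$. Any representation of $x$ as a strict convex combination of points of $V_S$ is, since $V_S \subseteq S$, in particular a representation using points of $S$. As $x \in \ext S$, no such nontrivial representation exists, so $x$ cannot be a strict convex combination of other points of $V_S$ either; hence $x \in \ext V_S$.

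For the reverse inclusion $\ext V_S \subseteq V_S \cap \ext S$, I would prove the contrapositive of the nontrivial part. Clearly $\ext V_S \subseteq V_S$, so it remains to show $\ext V_S \subseteq \ext S$. Suppose $x \in V_S$ but $x \notin \ext S$. Then $x = \lambda y + (1-\lambda) z$ for distinct $y, z \in S$ and $\lambda \in (0,1)$, i.e., $x$ is a strict convex combination of $y$ and $z$. Since $x \in V_S$, Lemma~\ref{lemma:deutsch} yields $y, z \in V_S$. But then $x$ is a strict convex combination of two distinct points of $V_S$, so $x \notin \ext V_S$. Taking the contrapositive gives $\ext V_S \subseteq \ext S$, which together with $\ext V_S \subseteq V_S$ completes the inclusion.

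The only delicate point, and the step I expect to require the most care, is reconciling the definition of an extreme point (phrased as ``not a convex combination of others,'' possibly of $m$ points) with the two-point strict-convex-combination form most natural for invoking Lemma~\ref{lemma:deutsch}. I would handle this by noting that for a convex set $S$ the condition ``$x$ is a convex combination of points of $S$ distinct from $x$'' is equivalent to ``$x = \lambda y + (1-\lambda)z$ for some distinct $y,z \in S$ and $\lambda \in (0,1)$,'' so the two-point reduction is harmless; alternatively one can apply Lemma~\ref{lemma:deutsch} directly to a general strict convex combination $x = \sum_i \lambda_i x_i$ with $x_i \in S$, conclude each $x_i \in V_S$, and again place $x$ in the non-extreme part of $V_S$. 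Either route closes the argument without further calculation.
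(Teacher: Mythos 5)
Your proof is correct and follows essentially the same route as the paper: the easy inclusion $V_S \cap \ext S \subseteq \ext V_S$ via $V_S \subseteq S$, and the other inclusion by applying Lemma~\ref{lemma:deutsch} to a strict convex combination witnessing $x \notin \ext S$. Your two-point reduction (justified by convexity of $S$) is a harmless repackaging of the paper's $m$-point argument, as you yourself note.
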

\begin{proof}
  We start by proving $\ext V_S \subseteq V_S \cap \ext S$.
  Let $x \in \ext V_{S}$.
  Clearly, $x \in V_S$.
  If $x \notin \ext S$, then there are $x_1, \ldots, x_m \in S$ such that $x$ is
  a strict convex combination of $x_1, \ldots, x_m$.
  Lemma~\ref{lemma:deutsch} implies that $x_i \in V_S$ for every $i = 1, \ldots,
  m$.
  Thus, $x$ is not an extreme point of $V_S$.
  This contradiction proves that $x \in \ext S$.

  If $x \in V_S \cap \ext S$ but $x \notin \ext V_S$, then $x$ is a strict
  convex combination of some elements of $V_S$.
  Since $V_S \subseteq S$, $x$ is a strict convex combination of some element of
  $S$.
  This is a contradiction with $x \in \ext S$.
\end{proof}

\begin{lemma} \label{lemma:visible_chull}
  Let $S \subseteq \mathbb{R}^n$ be closed set such that $\conv S$ is closed and
  $0 \notin \conv S$.
  Then,
  \[
    \conv(V_{\conv S}) = \conv(S \cap V_{\conv S}).
  \]
\end{lemma}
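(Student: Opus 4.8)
The plan is to split the desired equality into two inclusions, of which one is essentially free. Since $S \cap V_{\conv S} \subseteq V_{\conv S}$, taking convex hulls gives $\conv(S \cap V_{\conv S}) \subseteq \conv(V_{\conv S})$ immediately. So the entire content lies in the reverse inclusion $\conv(V_{\conv S}) \subseteq \conv(S \cap V_{\conv S})$.

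For that inclusion I would first observe that, because $\conv(S \cap V_{\conv S})$ is itself a convex set, it suffices to prove the pointwise containment $V_{\conv S} \subseteq \conv(S \cap V_{\conv S})$; taking the convex hull of the left-hand side then yields the claim (the convex hull of a subset of a convex set stays inside that convex set). This reduces the lemma to a statement about a single visible point.

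Now fix $x \in V_{\conv S}$. Since $x \in \conv S$, Carath\'eodory's theorem lets me write $x = \sum_{i=1}^m \lambda_i x_i$ with $x_i \in S$ and $\lambda_i \geq 0$ summing to $1$; discarding the indices with $\lambda_i = 0$, I may assume every $\lambda_i > 0$, i.e.\ $x$ is a \emph{strict} convex combination of the $x_i$. The key point is that the generators $x_i$ lie in $S$, not merely in $\conv S$ --- this is exactly what Carath\'eodory provides. Because $\conv S$ is closed and convex with $0 \notin \conv S$, Lemma~\ref{lemma:deutsch} applies to the set $\conv S$: a point of $V_{\conv S}$ written as a strict convex combination of points of $\conv S$ forces each of those points to lie in $V_{\conv S}$. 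Hence $x_i \in V_{\conv S}$ for every $i$, so $x_i \in S \cap V_{\conv S}$, and therefore $x = \sum_i \lambda_i x_i \in \conv(S \cap V_{\conv S})$, as required.

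I do not expect a genuine obstacle here; the only step requiring care is making sure the convex representation of $x$ has all positive coefficients and uses points of $S$, so that Lemma~\ref{lemma:deutsch} can be invoked verbatim. Carath\'eodory's theorem secures both at once, and the closedness of $\conv S$ together with $0 \notin \conv S$ guarantees the hypotheses of Lemma~\ref{lemma:deutsch} are met. The degenerate case $m = 1$, where $x = x_1 \in S$, is covered by the same argument (a point is a strict convex combination of itself).
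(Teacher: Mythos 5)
Your proof is correct and takes essentially the same route as the paper: the trivial inclusion by monotonicity of the convex hull, then writing $x \in V_{\conv S}$ as a strict convex combination of points $x_1,\ldots,x_m \in S$ (with zero coefficients discarded) and applying Lemma~\ref{lemma:deutsch} to the closed convex set $\conv S$ to conclude $x_i \in S \cap V_{\conv S}$. The only difference is cosmetic: the paper's proof cites Lemma~\ref{lemma:extreme_points} at this step, which appears to be a typo for Lemma~\ref{lemma:deutsch} --- the lemma you correctly invoke.
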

\begin{proof}
  From $S \cap V_{\conv S } \subseteq V_{\conv S}$, it follows that
  $\conv(S \cap V_{\conv S}) \subseteq \conv(V_{\conv S})$.

  To prove the other inclusion it is enough to show that
  $V_{\conv S} \subseteq \conv(S \cap V_{\conv S})$.
  Let $x \in V_{\conv S}$.
  Then, $x \in \conv(S)$ and so $x$ is a strict convex combination of some points
  of $x_1, \ldots, x_m \in S$.
  Then, by Lemma~\ref{lemma:extreme_points}, $x_1, \ldots, x_m \in S \cap
  V_{\conv S}$.
  Thus, $x \in \conv(S \cap V_{\conv S})$.
\end{proof}

We remark that the previous lemma does not follow from
Lemma~\ref{lemma:char_extreme_visible} by just taking the convex hull operation to
the equality, since $\conv S$ may not have extreme points.
%

The following is a slight extension of \cite[Corollary 18.3.1]{Rockafellar1970}.
\begin{lemma} \label{lemma:extreme_points}
  Let $S \subseteq \mathbb{R}^n$ be a closed set.
  Then, $\ext \clconv {S} \subseteq  S$.
\end{lemma}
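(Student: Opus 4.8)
The plan is to show that every extreme point $x$ of $C := \clconv S$ lies in $S$, reducing everything to a limiting Carathéodory argument that is controlled by the recession cone of $C$. This is the Milman-type direction, but since $S$ is only closed (not compact) the limiting step is where all the work sits.

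First, if $\ext C = \emptyset$ the inclusion is trivial, so assume $x \in \ext C$. The existence of an extreme point prevents $C$ from containing a line (any point on a line contained in $C$ is the midpoint of two distinct points of $C$), so its recession cone $0^{+}C$ is pointed. I would then fix a vector $c$ in the interior of the dual cone $(0^{+}C)^{*}$, so that $\langle c, d\rangle > 0$ for every $d \in 0^{+}C \setminus \{0\}$; this forces $c$ to be bounded below on $C$, say $\langle c, z\rangle \geq \beta$ for all $z \in C$, and makes every slab $\{z \in C \st \beta \leq \langle c, z\rangle \leq M\}$ bounded. Since $x \in \cl(\conv S)$, pick $p_k \to x$ with $p_k \in \conv S$ and, by Carathéodory, write $p_k = \sum_{i=0}^{n} \lambda_i^k x_i^k$ with $x_i^k \in S$, $\lambda_i^k \geq 0$, $\sum_i \lambda_i^k = 1$. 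Passing to a subsequence, assume $\lambda_i^k \to \lambda_i$ for each $i$.

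The main obstacle is that the points $x_i^k$ need not be bounded, so the limit of the representation is not immediate; this is exactly where $c$ is used. For an index $i$ with $\lambda_i > 0$, the identity $\langle c, p_k\rangle = \sum_j \lambda_j^k \langle c, x_j^k\rangle$, the lower bound $\langle c, x_j^k\rangle \geq \beta$, and the convergence $\langle c, p_k\rangle \to \langle c, x\rangle$ together bound $\langle c, x_i^k\rangle$ from above; hence $x_i^k$ stays in a bounded slab of $C$ and, after a further subsequence, $x_i^k \to x_i^{*} \in S$ (here I use that $S$ is closed). For an index $i$ with $\lambda_i = 0$, the same estimate shows the vectors $v_i^k := \lambda_i^k x_i^k$ are bounded and converge to some $v_i \in 0^{+}C$ (either $v_i = 0$, or $\norm{x_i^k} \to \infty$ while $\lambda_i^k \norm{x_i^k}$ stays bounded, so $v_i^k$ approaches a nonnegative multiple of a recession direction).

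Taking limits then yields $x = q + r$, where $q = \sum_{i : \lambda_i > 0} \lambda_i x_i^{*}$ is a convex combination of points of $S$ (the weights sum to $1$) and $r = \sum_{i : \lambda_i = 0} v_i \in 0^{+}C$. To finish, I would invoke extremality twice. Since $q \in C$ and $q + 2r \in C$, and $x = q + r$ is their midpoint, extremality of $x$ forces $r = 0$ and $x = q$; thus $x$ is a convex combination of the points $x_i^{*} \in S$. Extremality of $x$ then forces $x = x_i^{*}$ for every $i$ with positive weight, so $x \in S$. The only delicate point is the boundedness argument of the third paragraph; once $0^{+}C$ is shown to be pointed and $c$ is chosen, the passage to the limit and the two applications of extremality are routine.
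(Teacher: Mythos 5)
Your proof is correct, but it takes a genuinely different route from the paper's. The paper argues through exposed points: for an exposed point $x_0$ of $\clconv S$ with exposing direction $\alpha$, the supremum of $\alpha^\T x$ over $S$ equals $\alpha^\T x_0$, attainment plus uniqueness of the maximizer puts $x_0 \in S$, and Straszewicz's theorem together with the closedness of $S$ then upgrades the inclusion from exposed points to all extreme points. You instead run a direct Carath\'eodory limit: represent approximants $p_k \to x$ as convex combinations from $S$, use a functional $c$ from the interior of the dual cone of $0^{+}C$ to trap the points with asymptotically positive weights inside bounded slabs and to show that the vanishing-weight terms $\lambda_i^k x_i^k$ accumulate in the recession cone, arrive at the decomposition $x = q + r$ with $q \in \conv S$ and $r \in 0^{+}C$, and eliminate first $r$ and then the nontriviality of the combination by two applications of extremality. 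The supporting steps check out: the existence of an extreme point does force $C$ to be line-free, hence $0^{+}C$ pointed, which is exactly what guarantees $\operatorname{int}\bigl((0^{+}C)^{*}\bigr) \neq \emptyset$; boundedness of the slabs and the fact that normalized limits of unbounded sequences in $C$ lie in $0^{+}C$ are the standard recession facts you need; and the bounded case degenerates harmlessly ($0^{+}C = \{0\}$, any $c$ works). As for what each approach buys: the paper's proof is short modulo classical machinery, but its one-line attainment claim (``since $S$ is closed, there exists $x_1 \in S$ with $\alpha^\T x_1 = \alpha^\T x_0$'') is not automatic for unbounded $S$ --- a linear functional need not attain its supremum on a closed set, and rescuing that step itself requires a small recession argument (non-attainment would yield $d \in 0^{+}\clconv S$ with $\alpha^\T d = 0$, enlarging the exposed face beyond $\{x_0\}$). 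Your proof makes this unboundedness control fully explicit and is self-contained, avoiding Straszewicz entirely, which is arguably the honest content of extending Rockafellar's compact-case corollary to general closed sets; the price is length and a few subsequence extractions.
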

\begin{proof}
  Recall that $x_0$ is an exposed point~\cite[Section 18]{Rockafellar1970} of
  a closed convex set $C$ if and only if there exists an $\alpha$ such that
  $\{x_0\} = \argmax_{x \in C} \alpha^\T x_0$.

  We will show that the exposed points of $\clconv S$ is a subset of $S$.
  Then, by Straszewicz's Theorem~\cite[Theorem 18.6]{Rockafellar1970} and the
  closedness of $S$, it follows that $\ext \clconv {S} \subseteq  S$.
  Note that when the set of exposed points is empty, the result follows trivially.
  Thus, we assume that the set of exposed points is non-empty.

  Let $x_0$ be an exposed point of $\clconv S$ and let $\alpha$ be a direction
  that exposes it.
  Then, $\sup_{x \in S} \alpha^\T x = \alpha^\T x_0$.
  Since $S$ is closed, there exists $x_1 \in S$ such that $\alpha^\T x_1
  = \alpha^\T x_0$.
  However, since $x_1 \in S \subseteq \clconv S$ and $\alpha$ exposes $x_0$, we
  must have $x_1 = x_0$.
  Thus, $x_0 \in S$.
\end{proof}

\subsection{Results}

Let us start by answering Question~\ref{q:smallest_convex}.
\begin{theorem} \label{thm:smallest_convex}
  Let $S \subseteq \mathbb{R}^n$ be closed.
  Then,
  \[
    (\clconv V_{\clconv S})^0  = S^0.
  \]
  Furthermore, if $C \subseteq \mathbb{R}^n$ is a closed convex generator of
  $S^0$, then
  \[
    \clconv(V_{\clconv S}) \subseteq C.
  \]
\end{theorem}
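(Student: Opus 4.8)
The plan is to prove the two assertions separately: the equality is a short composition of known invariances, while the containment is the substantive minimality claim.

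For the identity $(\clconv V_{\clconv S})^0 = S^0$, I would simply chain invariances of the reverse polar. Write $T = \clconv S$, which is closed, so Proposition~\ref{prop:visible_reverse} gives $V_T^0 = T^0$. Since the reverse polar is unchanged under taking the convex hull and the closure (Lemma~\ref{lemma:ruys}(1)), we have $(\clconv V_T)^0 = V_T^0$ and $T^0 = (\clconv S)^0 = S^0$. Composing these yields $(\clconv V_{\clconv S})^0 = V_{\clconv S}^0 = (\clconv S)^0 = S^0$. The only case needing separate attention is $0 \in \clconv S$, i.e.\ $S^0 = \emptyset$, which degenerates and is handled exactly as in the proof of Proposition~\ref{prop:visible_reverse}.

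For the minimality, assume $C$ is a closed convex generator of $S^0$, so that $C^0 = S^0$, and assume $S^0 \neq \emptyset$ (the empty case being degenerate as above). Because $C$ is closed and convex, the desired inclusion $\clconv V_{\clconv S} \subseteq C$ will follow once I establish $V_{\clconv S} \subseteq C$. The engine of the argument is the identity $\shw C = \shw \clconv S$. To obtain it, apply the reverse polar to both sides of $C^0 = S^0$ and invoke the bipolar formula of Lemma~\ref{lemma:ruys}(4): this gives $(C^0)^0 = \shw C$ and $(S^0)^0 = \shw \clconv S$, where the hypotheses $0 \notin C = \clconv C$ and $0 \notin \clconv S$ required to apply (4) follow from $C^0 = S^0 \neq \emptyset$ together with Lemma~\ref{lemma:ruys}(2). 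Since $X \subseteq \shw X$ always holds, the identity furnishes the two inclusions $\clconv S \subseteq \shw C$ and $C \subseteq \shw \clconv S$.

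Now fix $x \in V_{\clconv S}$. Then $x \in \clconv S \subseteq \shw C$, so $x = \lambda c$ with $\lambda \geq 1$ and $c \in C$; I claim $\lambda = 1$. If not, then $\tfrac{1}{\lambda} \in (0,1)$, and since $c \in C \subseteq \shw \clconv S$ we may write $c = \mu s$ with $\mu \geq 1$ and $s \in \clconv S$, whence $s = \tfrac{1}{\lambda\mu} x$ with $\lambda\mu > 1$. Thus $s$ is a point of $\clconv S$ lying in $[0,1)x$, contradicting the visibility of $x$ (Definition~\ref{def:visible}, second characterization, valid since $0 \notin \clconv S$). Hence $\lambda = 1$ and $x = c \in C$, proving $V_{\clconv S} \subseteq C$; taking closed convex hulls completes the proof. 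I expect the main obstacle to be precisely this last passage: an equality of shadows only determines the sets up to scaling away from the origin, and converting it back into an honest membership $x \in C$ forces one to use the visibility of $x$ to pin the scaling factor down to $1$, which is the crux of the ``bouncing'' argument.
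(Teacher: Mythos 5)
Your proof is correct, but the minimality half follows a genuinely different route from the paper's. The first equality is handled identically in both proofs (apply Proposition~\ref{prop:visible_reverse} to the closed set $\clconv S$ and chain with the invariances of Lemma~\ref{lemma:ruys}, degenerate case aside). For the containment, the paper argues by contradiction with explicit separating hyperplanes: taking $\bar x \in V_{\clconv S} \setminus C$, it splits into the cases $[0,1]\bar x \cap C = \emptyset$ (separate the compact segment from $C$ to produce $\alpha \in C^0 \setminus S^0$) and $[0,1]\bar x \cap C \neq \emptyset$ (then $\mu \bar x \in C$ for some $\mu \in (0,1)$, and visibility of $\bar x$ puts $\mu\bar x$ outside $\clconv S$, so separating it produces $\alpha \in S^0 \setminus C^0$); either way $C^0 = S^0$ is contradicted. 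You instead work on the primal side: the bipolar formula of Lemma~\ref{lemma:ruys}(4), whose hypotheses you correctly verify via Lemma~\ref{lemma:ruys}(2), converts $C^0 = S^0$ into the shadow identity $\shw C = \shw \clconv S$, and your scaling argument through the two resulting inclusions uses visibility to pin the dilation factor to $1$, giving $V_{\clconv S} \subseteq C$ directly (the logic is sound: any representation $x = \lambda c$ with $\lambda > 1$ bounces back to a point of $\clconv S$ in $[0,1)x$, contradicting the second characterization in Definition~\ref{def:visible}, which applies since $0 \notin \clconv S$). Your route is shorter and more conceptual --- it isolates the fact that reverse polars determine sets only up to their shadows, and visibility is exactly the condition that kills outward dilation --- at the cost of invoking the bipolar theorem, whose proof ultimately hides the same separation arguments the paper performs by hand. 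The paper's route is self-contained modulo a standard separation theorem and is constructive in the contrapositive direction: it exhibits an explicit cut witnessing $C^0 \neq S^0$. Both proofs correctly dispose of the degenerate case $S^0 = \emptyset$ by noting $V_{\clconv S} = \{0\} \subseteq C$.
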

\begin{proof}
  Note that if $S^0 = \emptyset$, then $0 \in \clconv S$ and $V_{\clconv S}
  = \{0\}$, from which the theorem clearly follows.
  Thus, we assume $S^0 \neq \emptyset$.

  Lemma~\ref{lemma:ruys} implies that $(\clconv V_{\clconv S})^0 = (V_{\clconv S})^0$
  and $S^0 = (\clconv S)^0$.
  Proposition~\ref{prop:visible_reverse} implies $(\clconv S)^0 = (V_{\clconv S})^0$.

  To show the second statement of the theorem, let $C$ be closed and convex such
  that $C^0 = S^0$.
  Since $C$ is closed and convex, it is enough to prove that $V_{\clconv(S)}
  \subseteq C$.
  Suppose, by contradiction, that this is not the case, i.e., there is an
  $\bar{x} \in V_{\clconv(S)}$ such that $\bar{x} \notin C$.
  There are two cases, either $[0,1]\bar{x} \cap C = \emptyset$ or $[0,1]\bar{x}
  \cap C \neq \emptyset$.
  We will deduce a contradiction from each of them.

  First, suppose $[0,1]\bar{x} \cap C = \emptyset$.
  Both sets are closed and $[0,1]\bar{x}$ is bounded, thus, they can be
  separated.
  Indeed, $0 \in [0,1]\bar{x}$ and~\cite[Corollary 11.4.1]{Rockafellar1970}
  ensure the existence of $\alpha$ such that $\alpha x \geq 1$ for every $x \in
  C$ and $\alpha \bar{x} < 1$.
  This means that $\alpha \in C^0$.
  However, $\alpha \notin (\clconv S)^0 = S^0$, since $\bar{x} \in \clconv S$.
  This contradicts $S^0 = C^0$.

  Now, suppose $[0,1]\bar{x} \cap C \neq \emptyset$.
  Since $0, \bar{x} \notin C$, there must be $\mu \in (0,1)$ such that $\mu
  \bar{x} \in C$.
  However, $\bar{x} \in V_{\clconv(S)}$ implies that $\mu \bar{x} \notin
  \clconv(S)$.
  Thus, the same argument as above ensures that $\mu \bar{x}$ can be separated
  from $\clconv(S)$.
  Therefore, there is an $\alpha$ such that $\alpha^\T x \geq 1$ for every $x
  \in \clconv(S)$ while $\alpha^\T \mu\bar{x} < 1$.
  Hence, $\alpha \in S^0$ and the contradiction follows from the fact that
  $\mu\bar{x} \in C$ implies $\alpha \notin C^0$.

  Therefore, we conclude that $\clconv V_{\clconv S} \subseteq C$.
\end{proof}

Now we show that if $\conv S$ is closed and $0 \notin \conv S$, then $\clconv
V_{\conv S}$ is the answer to Question~\ref{q:inter}, i.e., is \emph{the} smallest
closed convex $U$ such that $(U \cap S)^0 = S^0$.
\begin{theorem} \label{thm:smallest_inter}
  Let $S \subseteq \mathbb{R}^n$ be a closed set such that $\conv S$ is closed
  and $0 \notin \conv S$, i.e., $S^0 \neq \emptyset$.
  Then,
  \[
    (\clconv(V_{\conv S}) \cap S)^0  = S^0.
  \]
  Furthermore, if $C$ is closed and convex such that $(C \cap S)^0 = S^0$, then
  \[
    \clconv(V_{\conv S}) \subseteq C.
  \]
\end{theorem}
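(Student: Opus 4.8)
The plan is to prove the two assertions separately, writing $W := \clconv(V_{\conv S})$ for brevity and using throughout that $\conv S = \clconv S$, which holds because $\conv S$ is assumed closed.

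For the equality $(W \cap S)^0 = S^0$, the inclusion $S^0 \subseteq (W \cap S)^0$ is immediate from monotonicity of the reverse polar (Lemma~\ref{lemma:ruys}, item~3), since $W \cap S \subseteq S$. For the reverse inclusion I would sandwich $W \cap S$ between $S \cap V_{\conv S}$ and $S$: from $S \cap V_{\conv S} \subseteq V_{\conv S} \subseteq W$ and $S \cap V_{\conv S} \subseteq S$ we get $S \cap V_{\conv S} \subseteq W \cap S$, whence $(W \cap S)^0 \subseteq (S \cap V_{\conv S})^0$ by monotonicity. It then remains to show $(S \cap V_{\conv S})^0 = S^0$, for which I would chain insensitivity of the reverse polar to convex hulls (Lemma~\ref{lemma:ruys}, item~1), the identity $\conv(S \cap V_{\conv S}) = \conv(V_{\conv S})$ (Lemma~\ref{lemma:visible_chull}, whose hypotheses are exactly those assumed here), and Proposition~\ref{prop:visible_reverse} applied to the closed set $\conv S$, obtaining
\[ (S \cap V_{\conv S})^0 = (\conv(S \cap V_{\conv S}))^0 = (\conv V_{\conv S})^0 = V_{\conv S}^0 = (\conv S)^0 = S^0. \]
Combined with the first inclusion this closes the first part.

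For the minimality statement, let $C$ be closed and convex with $(C \cap S)^0 = S^0$. Since $C$ is closed and convex it suffices to prove $V_{\conv S} \subseteq C$, as this forces $\clconv(V_{\conv S}) \subseteq C$. The naive approach is to imitate Theorem~\ref{thm:smallest_convex}: take $\bar x \in V_{\conv S} \setminus C$ and separate. When $[0,1]\bar x \cap C = \emptyset$ this works, because a separator $\alpha$ with $\alpha^\T x \geq 1$ on $C$ lies in $C^0 \subseteq (C \cap S)^0 = S^0$ yet cuts off $\bar x \in \clconv S$, contradicting $\alpha \in (\clconv S)^0 = S^0$. I expect the overlapping case $[0,1]\bar x \cap C \neq \emptyset$ to be the main obstacle: there one would want to separate some $\mu\bar x \in C$ from $\clconv S$, but visibility forces $\mu\bar x \notin \clconv S \supseteq S$, hence $\mu \bar x \notin C \cap S$, so the resulting separator need not lie in $(C \cap S)^0$ and the argument of Theorem~\ref{thm:smallest_convex} breaks down.

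To bypass this obstacle entirely I would replace separation by a biduality argument that treats all $\bar x$ uniformly. Since $S^0 = (C \cap S)^0 \neq \emptyset$, neither $\clconv S$ nor $\clconv(C \cap S)$ contains $0$ (Lemma~\ref{lemma:ruys}, item~2), so item~4 applies to both; taking reverse polars of $(C \cap S)^0 = S^0$ gives
\[ \shw \clconv(C \cap S) = ((C \cap S)^0)^0 = (S^0)^0 = \shw \clconv S. \]
Now fix $\bar x \in V_{\conv S}$. Then $\bar x \in \conv S = \clconv S \subseteq \shw \clconv S = \shw \clconv(C \cap S)$, so $\bar x = \lambda z$ with $\lambda \geq 1$ and $z \in \clconv(C \cap S)$. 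On one hand $z \in \clconv(C \cap S) \subseteq C$; on the other hand $z \in \clconv(C \cap S) \subseteq \clconv S$ while $z = \tfrac{1}{\lambda}\bar x$ lies on the segment $[0,1]\bar x$. But visibility of $\bar x$ forbids any point $\mu\bar x$ with $\mu \in [0,1)$ from lying in $\clconv S$, forcing $\tfrac{1}{\lambda} = 1$, i.e.\ $z = \bar x$. Hence $\bar x = z \in C$, which proves $V_{\conv S} \subseteq C$ and therefore $\clconv(V_{\conv S}) \subseteq C$.
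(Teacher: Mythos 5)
Your proof is correct, and while the first half follows the paper verbatim in spirit --- the same sandwich $S \cap V_{\conv S} \subseteq \clconv(V_{\conv S}) \cap S \subseteq S$ and the same chain through Lemma~\ref{lemma:ruys}, Lemma~\ref{lemma:visible_chull}, and Proposition~\ref{prop:visible_reverse} --- the minimality half takes a genuinely different route. The paper never attempts the two-case separation you (rightly) predict would break down in the overlapping case; instead it observes via Lemma~\ref{lemma:ruys} that $\clconv(C \cap S)$ is itself a closed convex generator of $S^0$, invokes the already-established Theorem~\ref{thm:smallest_convex} to get $\clconv(V_{\conv S}) \subseteq \clconv(C \cap S)$, and finishes with the containment $\clconv(C \cap S) \subseteq C \cap \conv S$. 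Your biduality argument replaces this reduction entirely: item~4 of Lemma~\ref{lemma:ruys} turns the hypothesis $(C \cap S)^0 = S^0$ into the shadow identity $\shw \clconv(C \cap S) = \shw \clconv S$, and then the definition of visibility pins each $\bar x \in V_{\conv S}$ to its witness $z = \bar{x}/\lambda \in \clconv(C \cap S) \subseteq C$, forcing $\lambda = 1$. Both arguments are sound (your edge cases check out: $(C\cap S)^0 = S^0 \neq \emptyset$ guarantees $0 \notin \clconv(C\cap S)$, so item~4 applies, and $\conv S$ closed gives $\clconv S = \conv S$ where needed). The paper's version is shorter given Theorem~\ref{thm:smallest_convex} and makes the logical structure explicit --- minimality among intersecting generators reduces to minimality among all closed convex generators; yours is self-contained modulo Lemma~\ref{lemma:ruys}, avoids separation arguments altogether, and in effect re-derives the needed consequence of Theorem~\ref{thm:smallest_convex} directly from the biduality $(S^0)^0 = \shw \clconv S$, which makes the role of visibility more transparent.
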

\begin{proof}
  We first show that $(\clconv(V_{\conv S}) \cap S)^0  = S^0$.\\
  Clearly,
  \[
    V_{\conv S} \cap S \subseteq \clconv(V_{\conv S}) \cap S \subseteq S.
  \]
  Lemma~\ref{lemma:ruys} implies that
  \[
    S^0 \subseteq (\clconv(V_{\conv S}) \cap S)^0 \subseteq (V_{\conv S} \cap
    S)^0.
  \]
  Thus, it is enough to show that $(V_{\conv S} \cap S)^0 = S^0$.
  This follows from
  \begin{align*}
    (S \cap V_{\conv S})^0
        &= (\conv(S \cap V_{\conv S}))^0 &\text{Lemma~\ref{lemma:ruys}}\\
        &= (\conv V_{\conv S})^0 &\text{Lemma~\ref{lemma:visible_chull}}\\
        &= (V_{\conv S})^0 &\text{Lemma~\ref{lemma:ruys}}\\
        &= (\conv S)^0 &\text{Proposition~\ref{prop:visible_reverse}}\\
        &= S^0. &\text{Lemma~\ref{lemma:ruys}}\\
  \end{align*}

  To show the second statement of the theorem, let $C$ be a closed convex set
  such that $(C \cap S)^0 = S^0$.
  Lemma~\ref{lemma:ruys} implies that $(C \cap S)^0 = (\clconv(C \cap S))^0$.
  Theorem~\ref{thm:smallest_convex} implies that $\clconv(V_{\conv S}) \subseteq \clconv(C
  \cap S).$
  Clearly, $V_{\conv S} \subseteq \clconv(V_{\conv S})$ and $\clconv(C \cap S)
  \subseteq C \cap \conv S$.
  Therefore, $V_{\conv S} \subseteq C \cap \conv S$ which implies $V_{\conv S}
  \subseteq C$ as we wanted.
\end{proof}

Finally, we answer Question~\ref{q:smallest} in the case where $S$ is compact.
\begin{theorem} \label{thm:smallest_closed}
  Let $S$ be any closed set such that $0 \notin \clconv S$.
  If $D$ is any closed generator of $S^0$, then
  \[
    \clext V_{\clconv S} \subseteq D.
  \]

  If, in addition, $S$ is compact, then $\clext V_{\conv S}$ is the smallest
  closed generator of $S^0$.
\end{theorem}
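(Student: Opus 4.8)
The plan is to prove the two halves separately: that $\clext V_{\clconv S}$ is contained in \emph{every} closed generator, and that (in the compact case) it is \emph{itself} a generator; together these say it is the smallest closed generator. Throughout I would write $K=\clconv S$, so that $K$ is closed convex, $0\notin K$, and $S^0=K^0\neq\emptyset$ by Lemma~\ref{lemma:ruys}.

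For the containment statement, let $D$ be a closed generator and let $x_0\in\ext V_K$; I want $x_0\in D$, after which closedness of $D$ gives $\clext V_K\subseteq D$. First I would pass to $\clconv D$, which is a closed convex generator since $(\clconv D)^0=D^0=S^0$ by Lemma~\ref{lemma:ruys}; Theorem~\ref{thm:smallest_convex} then yields $x_0\in V_K\subseteq\clconv V_K\subseteq\clconv D$. Next I would bound $\clconv D$ from above: using $(D^0)^0=\shw\clconv D$ and $(S^0)^0=\shw K$ (Lemma~\ref{lemma:ruys}, with $S^0\neq\emptyset$) together with $D^0=S^0$, one gets $\clconv D\subseteq\shw\clconv D=\shw K$. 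The geometric heart is to show that a visible extreme point is extreme in the shadow, i.e.\ $\ext V_K\subseteq\ext\shw K$: if $x_0=\tfrac12(a+b)$ with $a,b\in\shw K$, writing $a=sp$, $b=tq$ with $p,q\in K$ and $s,t\geq1$, the point $\tfrac{2}{s+t}x_0=\tfrac{s}{s+t}p+\tfrac{t}{s+t}q$ lies in $K\cap[0,1]x_0$, so visibility forces $\tfrac{2}{s+t}x_0=x_0$, hence $s=t=1$, and then extremality of $x_0$ in $K$ (Lemma~\ref{lemma:char_extreme_visible}) forces $a=b=x_0$. Combining the three facts, $x_0\in\clconv D\subseteq\shw K$ with $x_0\in\ext\shw K$ gives $x_0\in\ext\clconv D$, and Lemma~\ref{lemma:extreme_points} applied to the closed set $D$ gives $\ext\clconv D\subseteq D$, so $x_0\in D$.

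For the compact case, $\conv S=K$ is compact, hence $C:=\clconv V_K$ is compact, and by the first part it remains only to show $(\clext V_K)^0=S^0$. Since $\clext V_K\subseteq C$ and $C^0=S^0$ (Theorem~\ref{thm:smallest_convex}), monotonicity (Lemma~\ref{lemma:ruys}) gives $S^0\subseteq(\clext V_K)^0$; for the reverse it suffices to prove $\clconv(\clext V_K)=C$, because then $(\clext V_K)^0=(\clconv(\clext V_K))^0=C^0=S^0$ by Lemma~\ref{lemma:ruys}. As $\clext V_K\subseteq\overline{V_K}\subseteq C$ one inclusion is immediate, so by Krein--Milman ($C=\clconv\ext C$) the whole statement reduces to the inclusion $\ext C\subseteq\overline{\ext V_K}$. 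This is the main obstacle, and the difficulty is precisely the non-closedness of $V_K$ (Remark~\ref{rmk:example_deutsch}), which prevents one from simply passing to limits. I would attack it as follows: by Straszewicz's theorem the exposed points of $C$ are dense in $\ext C$, so it is enough to approximate an exposed point $x$, with exposing direction $\alpha$ and $M=\max_{y\in C}\alpha^{\T}y$, by extreme visible points. For small $\epsilon>0$ the cap $C_\epsilon=\{y\in C:\alpha^{\T}y\geq M-\epsilon\}$ is compact convex and shrinks to $\{x\}$; since $\sup_{y\in V_K}\alpha^{\T}y=M$, I can pick $v\in V_K\cap C_\epsilon$ with $\alpha^{\T}v>M-\epsilon$ and write $v$ as a strict convex combination of points of $\ext C_\epsilon\subseteq K$. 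Lemma~\ref{lemma:deutsch} then forces each such vertex to be visible, while at least one of them, $e$, must satisfy $\alpha^{\T}e>M-\epsilon$ and is therefore extreme in $C$ (being in the interior of the cap); by Lemmas~\ref{lemma:deutsch} and~\ref{lemma:char_extreme_visible} we get $e\in\ext C\cap V_K\subseteq\ext V_K$, and $e\in C_\epsilon$ forces $e\to x$ as $\epsilon\to0$. Hence $x\in\overline{\ext V_K}$, which completes the reduction and the proof.
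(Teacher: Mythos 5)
Your proposal is correct, but it diverges from the paper's proof in an interesting way, chiefly in the compact case. For the containment $\clext V_{\clconv S} \subseteq D$ you and the paper use the same toolkit (Lemmas~\ref{lemma:ruys}, \ref{lemma:char_extreme_visible}, \ref{lemma:extreme_points}): the paper derives $\shw \clconv D = \shw \clconv S$ from $D^0 = S^0$ and then invokes Lemma~\ref{lemma:shadow_visible_invariant} to get $V_{\clconv D} = V_{\clconv S}$, whence $\ext V_{\clconv S} = \ext V_{\clconv D} \subseteq \ext \clconv D \subseteq D$; you instead prove by hand that a visible extreme point is extreme in the shadow, $\ext V_{\clconv S} \subseteq \ext \shw \clconv S$, and sandwich $\clconv D$ between $\clconv V_{\clconv S}$ (via Theorem~\ref{thm:smallest_convex}) and $\shw \clconv S$ --- essentially the same argument repackaged, and your midpoint computation with $a = sp$, $b = tq$ is a correct self-contained substitute for the shadow-invariance lemma. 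The genuine difference is the second half: the paper verifies $(\ext V_{\conv S})^0 = S^0$ by a direct dual computation --- given $\alpha \in (\ext V_{\conv S})^0$ and $x \in \conv S$, scale $x$ to a visible point $\lambda x$ and, if it is not extreme visible, decompose it via \cite[Theorem 18.5.1]{Rockafellar1970} as a strict convex combination of extreme points of $\conv S$, which Lemmas~\ref{lemma:deutsch} and~\ref{lemma:char_extreme_visible} place in $\ext V_{\conv S}$, giving $\alpha^{\T} x \geq 1/\lambda \geq 1$. You instead establish the stronger primal identity $\clconv \ext V_{\conv S} = \clconv V_{\conv S}$ via Straszewicz, shrinking caps $C_\epsilon$ around an exposed point, Minkowski decomposition inside the cap, and Lemma~\ref{lemma:deutsch}. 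Your route is longer and needs more care but buys a sharper geometric fact (every extreme point of $\clconv V_{\conv S}$ is a limit of extreme visible points, not merely an equality of reverse polars), whereas the paper's dual computation bypasses the cap machinery entirely. Two cosmetic repairs: your citation of Lemma~\ref{lemma:char_extreme_visible} for $\ext C \cap V_K \subseteq \ext V_K$ with $C = \clconv V_K$ is a misattribution --- that lemma is about $\ext V_K = V_K \cap \ext K$ --- but the containment you need is immediate, since extremality in $C$ passes to the subset $V_K \ni e$; and in the cap argument you should handle the degenerate case where $v$ is itself extreme in $C_\epsilon$ (take $e = v$) and state precisely that an extreme point of $C_\epsilon$ lying \emph{strictly above} the slicing hyperplane $\alpha^{\T} y = M - \epsilon$ is extreme in $C$ (your phrase ``interior of the cap'' is loose, though the standard segment-shrinking argument fills it in).
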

\begin{proof}
  First, by Lemma~\ref{lemma:ruys} and $D^0 = S^0$, we have $\shw \clconv
  D = \shw \clconv S$.
  Then, Lemma~\ref{lemma:shadow_visible_invariant} implies that $V_{\clconv D}
  = V_{\clconv S}$.
  Hence, $\ext V_{\clconv D} = \ext V_{\clconv S}$.
  Therefore, $\ext V_{\clconv S} = \ext V_{\clconv D} \subseteq \ext \clconv
  D \subseteq D$, where the first and second containments are due to
  Lemma~\ref{lemma:char_extreme_visible} and Lemma~\ref{lemma:extreme_points},
  respectively.

  To prove the second statement, by Lemma~\ref{lemma:ruys}, it is enough to show that
  $\ext V_{\conv S}^0 = S^0$.
  First, as $\ext V_{\conv S} \subseteq \conv S$, we have $S^0 \subset (\ext
  V_{\conv S})^0$.

  To prove the other containment take any $\alpha \in (\ext V_{\conv S})^0$.
  Let $x \in \conv S$ be arbitrary.
  We will prove that $\alpha^\T x \geq 1$.
  This will imply that $\alpha \in (\conv S)^0 = S^0$ and, therefore, that
  $(\ext V_{\conv S})^0 \subseteq S^0$.\\
  Let $\lambda \in (0,1]$ be such that $\lambda x \in V_{\conv S}$.
  If $\lambda x \in \ext V_{\conv S}$, then $\alpha^\T \lambda x \geq 1$, which
  implies that $\alpha^\T x \geq \frac{1}{\lambda} \geq 1$.\\
  Now, assume $\lambda x \notin \ext V_{\conv S}$.
  Since $S$ is compact, $\conv S$ is closed and we can use
  Lemma~\ref{lemma:char_extreme_visible} to obtain that $\ext V_{\conv S} = V_{\conv
  S} \cap \ext \conv S$.
  Thus, $\lambda x \notin \ext \conv S$.
  Also by the compactness of $S$, \cite[Theorem 18.5.1]{Rockafellar1970} implies
  that $\lambda x$ is a strict convex combination of some $x_1, \ldots, x_m \in
  \ext \conv S$.\\
  Lemma~\ref{lemma:deutsch} implies that $x_1, \ldots, x_m \in V_{\conv S}$ and so
  Lemma~\ref{lemma:char_extreme_visible} implies that $x_1, \ldots, x_m \in \ext
  V_{\conv S}$.
  Since $\alpha \in (\ext V_{\conv S})^0$, it follows $\alpha^\T x_i \geq 1$ for
  every $i = 1, \ldots, m$.
  Hence, $\alpha^\T \lambda x \geq 1$ and, as before, $\alpha^\T x \geq
  \frac{1}{\lambda} \geq 1$.
\end{proof}

We remark that the closure operation is needed since the extreme points of
a set, in general, do not form a closed set, see~\cite[p. 167]{Rockafellar1970}.

\section{Applications to MINLP} \label{sec:minlp}

%
Here we apply the results from Section~\ref{sec:reverse} to MINLP.

In this section, unless specified otherwise, $\xlp \in \mathbb{R}^n$, $C$ is
a closed convex set that contains $\xlp$, and $S :=  \{ x \in C \st g(x) \leq
0 \}$, where $g: C \to \mathbb{R}$ is continuous and $g(\xlp) > 0$.
The idea is that $C$ represents a convex relaxation of our MINLP and $\xlp \in
C$ is the current relaxation solution that is infeasible for a constraint $g(x)
\leq 0$.

The basic scheme for applying our results is the following translation of
Observation~\ref{intro:observation}.
\begin{proposition} \label{prop:minlp:generic}
  Let $D \subseteq C$ be such that $(D \cap S)^{\xlp} = S^{\xlp}$, and $T = \{ x \in D \st
  g(x) \leq 0 \}$.
  If $\alpha^\T (x - \xlp) \geq 1$ is a valid inequality for $T$, then it is valid
  for $S$.
\end{proposition}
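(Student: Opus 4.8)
The plan is to reduce the statement to an immediate consequence of the hypothesis by unfolding the definition of the reverse polar. First I would observe that $T$ and $D \cap S$ are in fact the same set: since $S = \{ x \in C \st g(x) \leq 0 \}$ and $D \subseteq C$, we have
\[
  D \cap S = \{ x \in D \st x \in C,\ g(x) \leq 0 \} = \{ x \in D \st g(x) \leq 0 \} = T,
\]
where the second equality uses $D \subseteq C$. This identification is the only place where the hypothesis $D \subseteq C$ enters, and it is what lets us apply the standing hypothesis $(D \cap S)^{\xlp} = S^{\xlp}$ directly to $T$.

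Next I would record the elementary but crucial fact underlying the whole translation of Observation~\ref{intro:observation}: an inequality $\alpha^\T(x - \xlp) \geq 1$ is valid for a set $X$ if and only if $\alpha \in X^{\xlp}$, which is simply the definition of the reverse polar. Applying this with $X = T$ shows that the hypothesis ``$\alpha^\T(x-\xlp) \geq 1$ is valid for $T$'' is precisely the statement $\alpha \in T^{\xlp}$.

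Finally, I would chain these observations together with the assumption. From $T = D \cap S$ we get $T^{\xlp} = (D \cap S)^{\xlp}$, and the hypothesis gives $(D \cap S)^{\xlp} = S^{\xlp}$; hence $\alpha \in T^{\xlp} = S^{\xlp}$. Unfolding the definition of $S^{\xlp}$ once more yields $\alpha^\T(x - \xlp) \geq 1$ for all $x \in S$, i.e.\ the inequality is valid for $S$, as desired.

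I do not expect any genuine obstacle here: the proposition is a direct definitional consequence of $(D \cap S)^{\xlp} = S^{\xlp}$, and in fact we obtain the equality $T^{\xlp} = S^{\xlp}$ rather than merely the claimed inclusion. The only point requiring a moment's care is the identification $T = D \cap S$, which depends on $D \subseteq C$; were that containment dropped, $T$ could be strictly larger than $D \cap S$ and the conclusion could fail.
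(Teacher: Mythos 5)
Your proposal is correct and follows essentially the same route as the paper, whose entire proof is the chain $\alpha \in T^{\xlp} = (D \cap S)^{\xlp} = S^{\xlp}$; you have merely made explicit the two steps the paper leaves implicit, namely that $D \subseteq C$ gives $T = D \cap S$ and that validity of $\alpha^\T(x - \xlp) \geq 1$ for a set is by definition membership of $\alpha$ in its reverse polar.
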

\begin{proof}
  Directly from $\alpha \in T^{\xlp} = (D \cap S)^{\xlp} = S^{\xlp}$.
\end{proof}

Of course, the applicability of the previous proposition relies on our ability
to obtain an easy-to-compute set $D$ that satisfies the hypothesis.
As shown in Section~\ref{sec:smallest}, $D = \clext \conv V_{\conv S}(\xlp)$
is the smallest we can hope for, but it is useless from a practical point of
view.
Instead, the set of visible points of $S$ (or a set enclosing them) is,
computationally, a better candidate as we will see in
Section~\ref{subsec:characterization_minlp}.
\begin{corollary} \label{cor:minlp:visibles_work}
  Let $D \subseteq C$ be such that $V_S(\xlp) \subseteq D$, and $T = \{ x \in D \st g(x)
  \leq 0 \}$.
  If $\alpha^\T (x - \xlp) \geq 1$ is a valid inequality for $T$, then it is valid for $S$.
\end{corollary}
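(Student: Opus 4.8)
The plan is to reduce Corollary~\ref{cor:minlp:visibles_work} to Proposition~\ref{prop:minlp:generic}, which has already done the conceptual work. The corollary replaces the hypothesis $(D \cap S)^{\xlp} = S^{\xlp}$ from the proposition with the more convenient hypothesis $V_S(\xlp) \subseteq D$. So the only gap to close is to show that $V_S(\xlp) \subseteq D \subseteq C$ implies $(D \cap S)^{\xlp} = S^{\xlp}$; once that inclusion of reverse polars is established, the conclusion (validity of the cut for $S$) is immediate from the proposition applied to the same $T = \{ x \in D : g(x) \leq 0 \}$.

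First I would observe that since $D \cap S \subseteq S$, Lemma~\ref{lemma:ruys}(3) gives $S^{\xlp} \subseteq (D \cap S)^{\xlp}$ for free, so only the reverse containment $(D \cap S)^{\xlp} \subseteq S^{\xlp}$ needs work. For this I would sandwich $D \cap S$ between the visible points and all of $S$: from $V_S(\xlp) \subseteq D$ and $V_S(\xlp) \subseteq S$ we get $V_S(\xlp) = V_S(\xlp) \cap S \subseteq D \cap S \subseteq S$. Applying Lemma~\ref{lemma:ruys}(3) to this chain (with $\xlp = 0$ after the standard translation of $S$ to $S - \xlp$) yields $S^{\xlp} \subseteq (D \cap S)^{\xlp} \subseteq V_S(\xlp)^{\xlp}$.

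The key step is then Proposition~\ref{prop:visible_reverse}, which tells us $V_S^{\xlp} = S^{\xlp}$ (again after translating so that $\xlp = 0$). Substituting this into the sandwich collapses it: both outer terms equal $S^{\xlp}$, forcing $(D \cap S)^{\xlp} = S^{\xlp}$. This is exactly the hypothesis required by Proposition~\ref{prop:minlp:generic}, so applying that proposition to $D$ and $T$ finishes the proof.

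I do not expect any genuine obstacle here, since the result is essentially a specialization: the heavy lifting lives in Proposition~\ref{prop:visible_reverse}. The only point demanding a little care is the bookkeeping around the translation by $\xlp$ — Proposition~\ref{prop:visible_reverse} is stated for $\xlp = 0$ and $V_S$, so I would make explicit that $V_S(\xlp)$ and $S^{\xlp}$ are the translated-back versions of $V_{S-\xlp}$ and $(S-\xlp)^0$, and that the monotonicity and equality of reverse polars are translation-invariant. Everything else is a direct chain of inclusions via Lemma~\ref{lemma:ruys}.
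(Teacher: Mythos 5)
Your proof is correct and follows essentially the same route as the paper: the paper likewise sandwiches $V_S(\xlp) \subseteq T = D \cap S \subseteq S$, applies the inclusion-reversing property of the reverse polar together with Proposition~\ref{prop:visible_reverse} to collapse $S^{\xlp} \subseteq (D \cap S)^{\xlp} \subseteq V_S(\xlp)^{\xlp} = S^{\xlp}$, and then invokes Proposition~\ref{prop:minlp:generic}. Your extra remark on translation invariance (reducing $\xlp$ to $0$) is careful bookkeeping the paper handles implicitly via its standing convention.
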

\begin{proof}
  Clearly, $V_S(\xlp) \subseteq T = D \cap S \subseteq S$.
  The inclusion-reversing property of the reverse polar implies that $S^{\xlp}
  \subseteq (D \cap S)^{\xlp} \subseteq V_S(\xlp)^{\xlp} = S^{\xlp}$, where the last equality follows
  from Proposition~\ref{prop:visible_reverse}.
  The statement follows from Proposition~\ref{prop:minlp:generic}.
\end{proof}

In the context of separation via convex underestimators
Corollary~\ref{cor:minlp:visibles_work} reads
\begin{corollary} \label{cor:minlp:practical}
  Let $D \subseteq C$ be a closed convex set such that $V_S(\xlp) \subseteq D$, and
  let $T = \{ x \in D \st g(x) \leq 0 \}$.
  If $g^{vex}_D(\xlp) > 0$ and $\partial g^{vex}_D(\xlp) \neq \emptyset$, then a
  gradient cut of $g^{vex}_D$ at $\xlp$ is valid for $S$.
\end{corollary}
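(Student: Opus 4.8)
The claim to prove is Corollary~\ref{cor:minlp:practical}: under the stated hypotheses, a gradient cut of the convex underestimator $g^{vex}_D$ at $\xlp$ is a valid inequality for $S$. The plan is to reduce this to Corollary~\ref{cor:minlp:visibles_work}, which is already available. That corollary tells us that any valid inequality of the form $\alpha^\T(x-\xlp)\ge 1$ for $T=\{x\in D : g(x)\le 0\}$ is automatically valid for $S$. So the whole task is to (i) produce the gradient cut, (ii) show it separates $\xlp$ and is valid for $T$, and (iii) normalize it into the form required by Corollary~\ref{cor:minlp:visibles_work}.

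**Constructing the gradient cut.** First I would pick a subgradient $s\in\partial g^{vex}_D(\xlp)$, which is nonempty by hypothesis. The gradient cut is the standard supporting inequality of the sublevel set of a convex function: since $g^{vex}_D$ is convex, for every $x\in D$ we have $g^{vex}_D(x)\ge g^{vex}_D(\xlp)+s^\T(x-\xlp)$. For any $x\in T$ we have $g(x)\le 0$, and since $g^{vex}_D$ underestimates $g$ on $D$, also $g^{vex}_D(x)\le g(x)\le 0$. Combining the two inequalities gives $s^\T(x-\xlp)\le -g^{vex}_D(\xlp)$ for all $x\in T$, i.e. $s^\T(x-\xlp)\le -g^{vex}_D(\xlp)<0$ using the hypothesis $g^{vex}_D(\xlp)>0$. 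This is the gradient cut, and it is valid for $T$ by construction.

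**Normalizing and concluding.** Because $g^{vex}_D(\xlp)>0$ I can divide through by the positive constant $g^{vex}_D(\xlp)$. Setting $\alpha := -s/g^{vex}_D(\xlp)$, the inequality $s^\T(x-\xlp)\le -g^{vex}_D(\xlp)$ is equivalent to $\alpha^\T(x-\xlp)\ge 1$, which holds for all $x\in T$. This is exactly the normalized form required, so $\alpha^\T(x-\xlp)\ge 1$ is valid for $T$. Since $V_S(\xlp)\subseteq D$ by hypothesis, Corollary~\ref{cor:minlp:visibles_work} applies verbatim and yields that $\alpha^\T(x-\xlp)\ge 1$ is valid for $S$. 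Unwinding the normalization, the original gradient cut is therefore valid for $S$, which is the assertion.

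**Expected obstacle.** There is no deep obstacle here: the result is essentially a bookkeeping corollary that packages the routine gradient-cut argument for convex underestimators into the reverse-polar framework. The one point that warrants care is checking that the two convexity facts used—underestimation on $D$ and the subgradient inequality—only need to hold on $D$ (where $g^{vex}_D$ lives), not on all of $C$; since $T\subseteq D$, this is automatic. The only genuine hypothesis being exploited is $g^{vex}_D(\xlp)>0$, which is precisely what guarantees the cut separates $\xlp$ and lets us normalize to the $\ge 1$ form that Corollary~\ref{cor:minlp:visibles_work} consumes.
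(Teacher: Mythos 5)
Your proposal is correct and takes essentially the same route as the paper: both arguments establish that the gradient cut of $g^{vex}_D$ at $\xlp$ is valid for $T$ (the paper via the intermediate relaxation $T_r = \{ x \in D \st g^{vex}_D(x) \leq 0 \}$ with $T \subseteq T_r$, you by directly chaining the subgradient inequality with $g^{vex}_D(x) \leq g(x) \leq 0$ on $T$) and then invoke Corollary~\ref{cor:minlp:visibles_work}. Your explicit normalization $\alpha = -s/g^{vex}_D(\xlp)$ to the form $\alpha^\T(x-\xlp) \geq 1$ spells out a step the paper leaves implicit, and correctly identifies $g^{vex}_D(\xlp) > 0$ as the hypothesis that makes it possible.
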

\begin{proof}
  Let $T_r := \{ x \in D \st g^{vex}_D(x) \leq 0 \}$ and $v \in \partial
  g^{vex}_D(0)$.
  The cut $g^{vex}_D(0) + v^\T x \leq 0$ is valid for $T_r$, and separates $0$
  from $T_r$.
  Since $T_r$ is a relaxation, i.e. $T \subseteq T_r$, it follows that the cut
  is also valid for $T$, and Corollary~\ref{cor:minlp:visibles_work} implies its
  validity for $S$.
\end{proof}

The previous result tells us that if we find a box, tighter than the bounds,
that contains the visible points, then we might be able to construct tighter
underestimators.
However, to compute a box containing $V_S(\xlp)$ we need to know how $V_S(\xlp)$
looks like.
That is the topic of the next section.

\subsection{Characterizing the visible points}
\label{subsec:characterization_minlp}
From the definition of visible points we have:
\begin{lemma} \label{lemma:characterization_visibility}
  Let $g : \mathbb{R}^n \to \mathbb{R}$ be a continuous function, $C \subseteq
  \mathbb{R}^n$ a closed convex set, and $S = \{ x \in C : g(x) \leq 0 \}$.
  If $\xlp \in C$ and $g(\xlp) > 0$, then
  \begin{equation} \label{eq:characterization_visibility}
    V_S(\xlp) = \left\{
      x \in C \st
      g(x) = 0,\
      g(x + \lambda(\xlp - x)) > 0 \text{ for every } \lambda
      \in (0,1]
    \right\}.
  \end{equation}
  Furthermore, if $g$ is differentiable, then every $x \in V_S(\xlp)$
  satisfies
  \[
    \langle \nabla g(x), \xlp - x \rangle \geq 0.
  \]
\end{lemma}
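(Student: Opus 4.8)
The plan is to prove the two assertions separately, each by directly unfolding the relevant definitions. For the characterization~\eqref{eq:characterization_visibility}, I would start from the second form of the visible points in Definition~\ref{def:visible}, namely $V_S(\xlp) = \{ x \in S : (x + (0,1](\xlp - x)) \cap S = \emptyset\}$, and translate the two membership conditions into statements about $g$. The structural fact to exploit is that $C$ is convex and contains both $\xlp$ and any $x \in S$, so the whole segment $x + [0,1](\xlp - x)$ lies in $C$; consequently a point $x + \lambda(\xlp - x)$ belongs to $S$ if and only if $g(x + \lambda(\xlp - x)) \leq 0$.

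For the forward inclusion, given $x \in V_S(\xlp)$ we have $x \in S$, hence $x \in C$ and $g(x) \leq 0$, while the half-open segment avoids $S$, which by the observation above forces $g(x + \lambda(\xlp - x)) > 0$ for every $\lambda \in (0,1]$. It then remains to upgrade $g(x) \leq 0$ to $g(x) = 0$: if $g(x) < 0$ held, continuity of $g$ would yield a whole initial portion of the segment on which $g < 0$, i.e.\ points of $S$ arbitrarily close to $x$ along the segment toward $\xlp$, contradicting visibility. The reverse inclusion is immediate: if $x \in C$ with $g(x) = 0$ and $g > 0$ along the open segment, then $x \in S$ while the half-open segment misses $S$, so $x$ is visible.

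For the gradient inequality I would reduce to a one-dimensional argument. Setting $\phi(\lambda) := g(x + \lambda(\xlp - x))$, the characterization just proved gives $\phi(0) = 0$ and $\phi(\lambda) > 0$ for $\lambda \in (0,1]$. Differentiability of $g$ yields $\phi'(0) = \langle \nabla g(x), \xlp - x \rangle$, and writing this as the limit of the difference quotients $\phi(\lambda)/\lambda$ as $\lambda \to 0^+$, each of which is strictly positive, gives $\langle \nabla g(x), \xlp - x \rangle = \phi'(0) \geq 0$.

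None of these steps presents a serious obstacle; the argument is essentially careful bookkeeping of the definitions. The only two points requiring genuine attention are the use of continuity to rule out $g(x) < 0$ in the forward inclusion---this is precisely where visibility, rather than mere membership in $S$, is used---and, for the gradient claim, the fact that only the one-sided (right) derivative at $\lambda = 0$ is available, so that the conclusion is the inequality $\langle \nabla g(x), \xlp - x \rangle \geq 0$ rather than an equality.
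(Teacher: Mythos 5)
Your proposal is correct and takes essentially the same route as the paper: convexity of $C$ keeps the segment $x + [0,1](\xlp - x)$ inside $C$ so that membership in $S$ reduces to the sign of $g$, continuity at $\lambda = 0$ upgrades $g(x) \leq 0$ to $g(x) = 0$ (the paper takes the limit $g(x) = \lim_{\lambda \to 0^+} g(x + \lambda(\xlp - x)) \geq 0$ directly rather than arguing by contradiction, a cosmetic difference), and the gradient inequality follows from the same one-sided difference quotient $\phi(\lambda)/\lambda$. Nothing further is needed.
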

\begin{proof}
  Given that $\xlp \notin S$, by definition we have $x \in V_S(\xlp)$ if and only if
  $x \in S$ and for every $\lambda \in (0,1]$, $x + \lambda (\xlp - x)
  \notin C$ or $g(x + \lambda (\xlp - x)) > 0$.
  However, the convexity of $C$ and $\xlp \in C$ imply that for $x \in S$,
  $x + \lambda (\xlp - x) \in C$.
  Hence,
  \[
    V_S(\xlp) = \{ x \in C \st g(x) \leq 0,\ g(x + \lambda (\xlp - x))
    > 0 \text{ for every } \lambda \in [0,1) \}.
  \]
  Since $g$ is continuous, it follows that for $x \in V_S(\xlp)$,
  \[
    0 \geq g(x) = \lim_{\lambda \to 0^+} g(x + \lambda (x - \xlp)) \geq 0.
  \]
  Thus, $g(x) = 0$ which proves~\eqref{eq:characterization_visibility}.

  Now, assume that $g$ is differentiable and let $x \in V_S(\xlp)$.
  Then,
  \[
    0 \leq \lim_{\lambda \to 0^+}
    \frac{g(x + \lambda(\xlp - x))}{\lambda}
    = \lim_{\lambda \to 0^+} \frac{g(x + \lambda(\xlp - x)) - g(x)}{\lambda}
    = \langle \nabla g(x), \xlp - x \rangle.
  \]
  This concludes the claim.
\end{proof}
\begin{remark}
  Note that if we drop the hypothesis that $\xlp$ is in $C$, then there might be
  visible points for which $g$ is strictly negative, and there does not seem to
  be a nice description of the visible points.
  In such a case, $V_S(\xlp)$ would be a disjunctive set and we would even lose
  the valid (non-linear) inequality $\langle \nabla g(x), \xlp - x \rangle \geq
  0$.
  Likewise, if $C$ were not convex, or if we had more than one non-convex
  constraint, e.g., some variable has to be binary, then there does not seem to
  be a nice description of the visible points.
  This last point is rather unfortunate, it means that it might not be easy to
  generalize the technique to relaxations that involve more than one non-convex
  constraint.
  In particular, since a mixed-integer set usually consists of multiple
  non-convex constraints, the techniques presented here might not be applicable
  to MILPs.\\
  On the other hand, considering more constraints might allow us to see
  more of the feasible region.
  Therefore, in such cases one might have to try to use stronger generators such
  as $\clconv V_{\conv S}$, see also~\cite{VenkatachalamNtaimo2016}.
\end{remark}

\subsubsection{Quadratic constraints}

For quadratic constraints, the visible points have a particularly simple
description.
\begin{theorem} \label{thm:qvis}
  Let $C$ be a closed, convex set that contains $\xlp$.
  Let $g(x) = x^\T Q x + b^\T x + c$ and $S = \{ x \in C : g(x) \leq 0 \}$.
  If $g(\xlp) > 0$, then
  \begin{equation*}
    V_S(\xlp) = \left\{ x \in C \st g(x) = 0,\
    \langle \nabla g(\xlp), x \rangle + b^\T \xlp + 2c \geq 0 \right\}
  \end{equation*}
\end{theorem}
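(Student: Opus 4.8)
The plan is to start from the characterization of visible points given in Lemma~\ref{lemma:characterization_visibility}, which tells us that
\[
  V_S(\xlp) = \{ x \in C \st g(x) = 0,\ g(x + \lambda(\xlp - x)) > 0 \text{ for all } \lambda \in (0,1] \}.
\]
The whole task reduces to understanding, for a \emph{quadratic} $g$, when the univariate function $\varphi(\lambda) := g(x + \lambda(\xlp - x))$ stays strictly positive on $(0,1]$, given that $\varphi(0) = g(x) = 0$. Because $g$ is quadratic, $\varphi$ is a quadratic polynomial in $\lambda$; writing $d = \xlp - x$, I would expand
\[
  \varphi(\lambda) = g(x) + \lambda \langle \nabla g(x), d \rangle + \lambda^2 d^\T Q d
  = \lambda \langle \nabla g(x), d \rangle + \lambda^2 d^\T Q d,
\]
using $g(x) = 0$ and the fact that for a quadratic the second-order term is exactly $\lambda^2 d^\T Q d$ with no remainder. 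So $\varphi(\lambda) = \lambda\bigl(\langle \nabla g(x), d\rangle + \lambda\, d^\T Q d\bigr)$.

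Next I would analyze positivity of $\varphi$ on $(0,1]$. Since $\lambda > 0$ on that interval, $\varphi(\lambda) > 0$ is equivalent to $\psi(\lambda) := \langle \nabla g(x), d \rangle + \lambda\, d^\T Q d > 0$ for all $\lambda \in (0,1]$. The function $\psi$ is \emph{affine} in $\lambda$, so it is positive on the whole interval $(0,1]$ if and only if it is positive at the two relevant endpoints of the closure; more precisely, an affine function is positive on $(0,1]$ iff it is positive at $\lambda = 1$ and nonnegative in the limit $\lambda \to 0^+$. The condition at the right endpoint is $\psi(1) = \langle \nabla g(x), d\rangle + d^\T Q d > 0$, and the condition at the left end is $\psi(0) = \langle \nabla g(x), d\rangle \geq 0$. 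I expect that the right-endpoint condition $\psi(1) > 0$ is the binding one and implies the value at $\lambda = 1$ equals precisely the quantity appearing in the theorem; indeed $\psi(1) = \langle \nabla g(x), \xlp - x\rangle + (\xlp - x)^\T Q (\xlp - x)$, and I would simplify this. Evaluating at $\lambda = 1$ gives $\varphi(1) = g(\xlp) > 0$ for free from the hypothesis, so the real content is showing the sign condition collapses to a single linear inequality in $x$.

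The key computation is to rewrite $\psi(1)$ as the claimed linear expression $\langle \nabla g(\xlp), x\rangle + b^\T \xlp + 2c$. Using $\nabla g(x) = 2Qx + b$ and $\nabla g(\xlp) = 2Q\xlp + b$, I would expand $\psi(1) = (2Qx + b)^\T(\xlp - x) + (\xlp - x)^\T Q(\xlp - x)$ and collect terms, exploiting the symmetry of $Q$ and the identity $g(x) = 0$ (equivalently $x^\T Q x + b^\T x + c = 0$) to absorb the purely-$x$ quadratic terms. I anticipate the cross terms $x^\T Q \xlp$ cancel in the right way and the surviving expression is linear in $x$ and equals $(2Q\xlp + b)^\T x + b^\T \xlp + 2c = \langle \nabla g(\xlp), x\rangle + b^\T \xlp + 2c$, exactly as stated.

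The main obstacle, and the step that requires genuine care rather than mechanical expansion, is to argue that the \emph{left-endpoint} condition $\psi(0) = \langle\nabla g(x), \xlp - x\rangle \geq 0$ is \emph{automatically implied} and therefore does not appear in the final description. Lemma~\ref{lemma:characterization_visibility} already guarantees $\langle \nabla g(x), \xlp - x\rangle \geq 0$ for every visible point, so that inequality is a necessary consequence and need not be stated separately; conversely, for the reverse inclusion I must verify that whenever $g(x) = 0$, $x \in C$, and the single condition $\langle \nabla g(\xlp), x\rangle + b^\T\xlp + 2c \geq 0$ holds, the affine function $\psi$ is strictly positive on all of $(0,1]$ — i.e.\ that positivity at the right endpoint, together with the geometry forcing $\psi(0) \geq 0$, suffices. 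I would handle this by noting that an affine function nonnegative at $\lambda = 0$ and strictly positive at $\lambda = 1$ is strictly positive on the half-open interval $(0,1]$, and treating the borderline case $\psi(1) = 0$ (i.e.\ the weak inequality holding with equality) carefully, since there $\varphi(1) = g(\xlp) > 0$ already rules out a contradiction at the right endpoint. Reconciling the strict inequality required on $(0,1]$ in the definition with the weak inequality appearing in the theorem statement is the delicate point, and I would resolve it by showing the strict positivity fails only on a boundary set that the continuity argument of Lemma~\ref{lemma:characterization_visibility} ($g(\xlp) > 0$) excludes.
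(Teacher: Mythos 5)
Your overall architecture is sound and close in spirit to the paper's: invoke Lemma~\ref{lemma:characterization_visibility}, factor $\varphi(\lambda) = \lambda\,\psi(\lambda)$ with $\psi(\lambda) = \langle \nabla g(x), \xlp - x\rangle + \lambda\, (\xlp - x)^\T Q (\xlp - x)$ affine, and use that an affine function with $\psi(0) \geq 0$ and $\psi(1) > 0$ is strictly positive on $(0,1]$ (the paper packages this same fact as: a quadratic $q$ with $q(0) = 0$, $q'(0) \geq 0$, $q(1) > 0$ has no roots in $(0,1]$). But you have the roles of the two endpoints exactly backwards, and the step you announce as ``the key computation'' would fail if carried out. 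Since $\varphi(1) = \psi(1)$, the right-endpoint value is $\psi(1) = g(\xlp)$, a \emph{constant independent of $x$}: expanding $(2Qx + b)^\T(\xlp - x) + (\xlp - x)^\T Q (\xlp - x)$ and using $x^\T Q x + b^\T x + c = 0$ yields $c + b^\T \xlp + \xlp^\T Q \xlp = g(\xlp)$, not the theorem's linear expression. So $\psi(1) > 0$ holds automatically from the hypothesis $g(\xlp) > 0$, never appears in the final description, and cannot be ``the binding one.''

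The condition that survives into the theorem is the \emph{left}-endpoint one: on the surface $g(x) = 0$ one has the identity
\[
  \langle \nabla g(x), \xlp - x \rangle
  = 2 x^\T Q \xlp - 2 x^\T Q x + b^\T \xlp - b^\T x
  = \langle \nabla g(\xlp), x \rangle + b^\T \xlp + 2c,
\]
obtained by substituting $-2 x^\T Q x = 2 b^\T x + 2c$; that is, $\psi(0) \geq 0$ \emph{is} the theorem's inequality, which the paper derives by adding $2g(x) = 0$ to the gradient inequality (and, conversely, subtracting $2g(x)$ from the theorem's inequality). Your ``main obstacle'' paragraph is therefore inverted: $\psi(0) \geq 0$ is not ``automatically implied and absent from the final description'' --- it is the entire content of the constraint --- and in the reverse inclusion there is no independent ``geometry forcing $\psi(0) \geq 0$''; it comes precisely from the assumed inequality together with $g(x) = 0$, while $\psi(1) = g(\xlp) > 0$ comes for free, so the borderline case $\psi(1) = 0$ you propose to treat carefully cannot occur. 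With the two endpoints swapped your argument becomes exactly the paper's proof; as written, it closes neither inclusion.
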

\begin{proof}
  $(\subseteq)$
  Let $x \in V_S(\xlp)$.
  By Lemma~\ref{lemma:characterization_visibility}, we have $g(x) = 0$ and
  $\langle \nabla g(x), \xlp - x \rangle \geq 0$.
  Equivalently,
  \begin{align*}
    x^\T Q x + b^\T x + c &= 0,\\
    2 x^\T Q (\xlp - x) + b^\T (\xlp - x)  &\geq 0.
  \end{align*}
  By multiplying the equation by 2, adding it to the inequality, and
  re-arranging terms we obtain the result.

  $(\supseteq)$
  Let $x$ satisfy $g(x) = 0$ and $\langle \nabla g(\xlp), x \rangle + b^\T
  \xlp + 2c \geq 0$.
  Then, subtracting $2g(x)$ from $\langle \nabla g(\xlp), x \rangle + b^\T
  \xlp + 2c \geq 0$ yields $\langle \nabla g(x), \xlp - x \rangle \geq 0$.
  Let
  \[
    q(\lambda) = g(x + \lambda (\xlp - x)) \text{, for } \lambda \in
    \mathbb{R}.
  \]
  The derivative is given by $q^{\prime}(\lambda) = \langle \nabla g (x
  + \lambda (\xlp - x)), \xlp - x \rangle$, and $q^{\prime}(0) = \langle \nabla
  g (x), \xlp - x \rangle \geq 0$.
  Since $q$ is quadratic, $q(1) = g(\xlp) > 0$, $q(0) = g(x) = 0$, and
  $q^{\prime}(0) \geq 0$, we have that $q$ has no roots in $(0,1]$.
  Thus, $g(x + \lambda (\xlp - x)) = q(\lambda) > 0$ for every $\lambda \in
  (0,1]$ and, from Lemma~\ref{lemma:characterization_visibility}, we conclude
  that $x \in V_S(\xlp)$ as we wanted.
\end{proof}

\begin{remark}
  Theorem~\ref{thm:qvis} implies in particular that the visible points of
  a closed convex set intersected with a quadratic constraint, from a point in
  the convex set, is always closed.
  This does not contradict~\cite[Example 15.5]{DeutschHundalZikatanov2013}
  mentioned in Remark~\ref{rmk:example_deutsch}.
  Indeed, if one represents the cone as a quadratic constraint $q(x) \leq 0$,
  then the origin must be feasible for the quadratic constraint.
  This is easily seen from the fact that the ray $[1, \infty) (1,0,0)$ is in the
  boundary of the cone, which implies that $q(\lambda,0,0) = 0$ for $\lambda
  \geq 0$.
  But $q(\lambda,0,0)$ is a univariate quadratic function and as such can have
  at most two roots if it is nonzero.
  Hence, $q(\lambda, 0, 0) = 0$ and, in particular, $q(0,0,0) = 0$.
\end{remark}

\begin{remark}
  The hyperplane $\langle \nabla g(\xlp), x \rangle + b^\T \xlp + 2c = 0$ is
  known as the \emph{polar hyperplane}~\cite{FasanoPesenti2017} of the point
  $\xlp$ with respect to the quadratic $g$ in projective geometry.
  In fact, homogenizing the quadratic $g$ yields the quadric
  \[
    g_h(x, x_0) = x^\T Q x + b^\T x x_0 + c x_0^2 =
    \begin{pmatrix}
      x \\
      x_0
    \end{pmatrix}^\T
    \begin{pmatrix}
      Q & \frac{b}{2} \\
      \frac{b^\T}{2} & c
    \end{pmatrix}
    \begin{pmatrix}
      x \\
      x_0
    \end{pmatrix}.
  \]
  The polar hyperplane of
  $
  \begin{pmatrix}
    \xlp \\
    1
  \end{pmatrix}
  $
  with respect to $g_h(x,x_0) = 0$ is then given by
  \begin{align*}
    &\langle \nabla g_h(x, x_0), (\xlp, 1) \rangle = 0
    \\ \iff &
    2\xlp^\T Q x + b^\T \xlp x_0 + b^\T x + 2c x_0 = 0.
  \end{align*}
  Intersecting with $x_0 = 1$ yields $\langle \nabla g(\xlp), x \rangle + b^\T
  \xlp + 2c = 0$.
\end{remark}

\begin{example} \label{ex:quad}
  Consider the function
  \[
    g(x_1, x_2, x_3) = - x_1 x_2 +x_1 x_3 + x_2 x_3 - x_1 - x_2 -x_3 + 1,
  \]
  the boxed domain $B = [-\frac{1}{10},2]\times[0,2]^2$, the constrained set
  \[
    S= \{x \in B \st g(x) \leq 0\},
  \]
  and the infeasible point $\xlp = (0,0,0)$.
  By Theorem~\ref{thm:qvis}, the visible points from $\xlp$ are given by
  \[
    V_S(\xlp) =
    \{
      (x_1, x_2, x_3) \in B \st g(x) = 0,\  x_1 + x_2 + x_3 \geq 0
    \},
  \]
  as shown in Figure~\ref{fig:ex:quad}.

  The tightest box bounding $V_S$ is
  \[
    R = [-\frac{1}{10}, 1] \times [0, \frac{1}{20}(23 + 3 \sqrt{5})] \times
    [0, \frac{1}{20}(19 + 3 \sqrt{5})].
  \]
  The linear underestimators of $g$ obtained by using
  McCormick~\cite{McCormick1976} inequalities for each term over $B$ and $R$ are
  \[
    1 \leq x_1 + 3 x_2 + \frac{11}{10} x_3 \text{ and }
    1 \leq x_1 + 2 x_2 + \frac{11}{10} x_3,
  \]
  respectively.
  Since $0 \leq x_2$, it follows that the underestimator over $R$ dominates the
  underestimator over $B$.
  We remark that the improvement in this particular cut is only due to the
  improvement on the upper bound of $x_1$.

  \begin{figure}[!h]
    \centering
    \includegraphics[width=.31\linewidth]{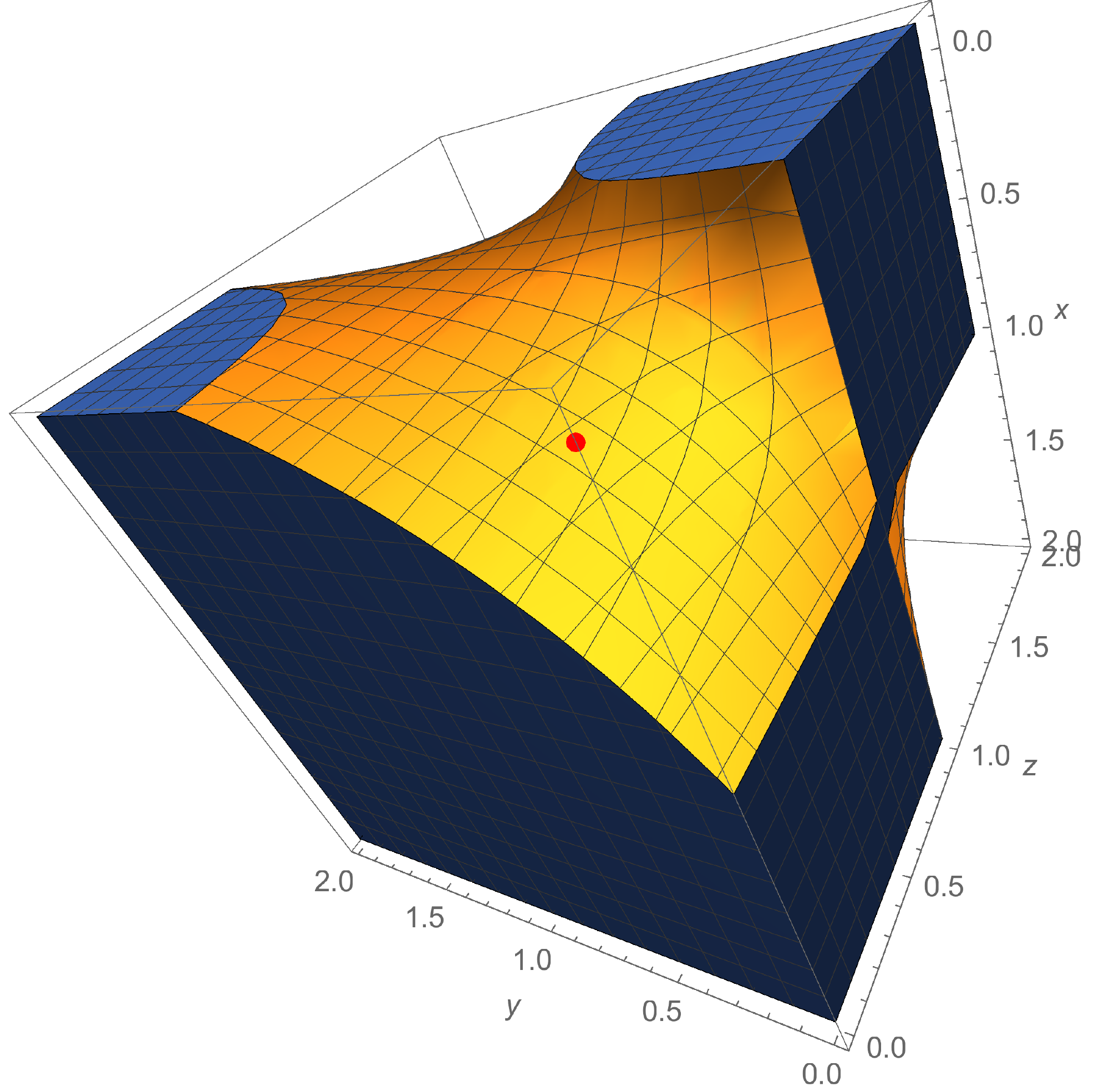}
    \includegraphics[width=.31\linewidth]{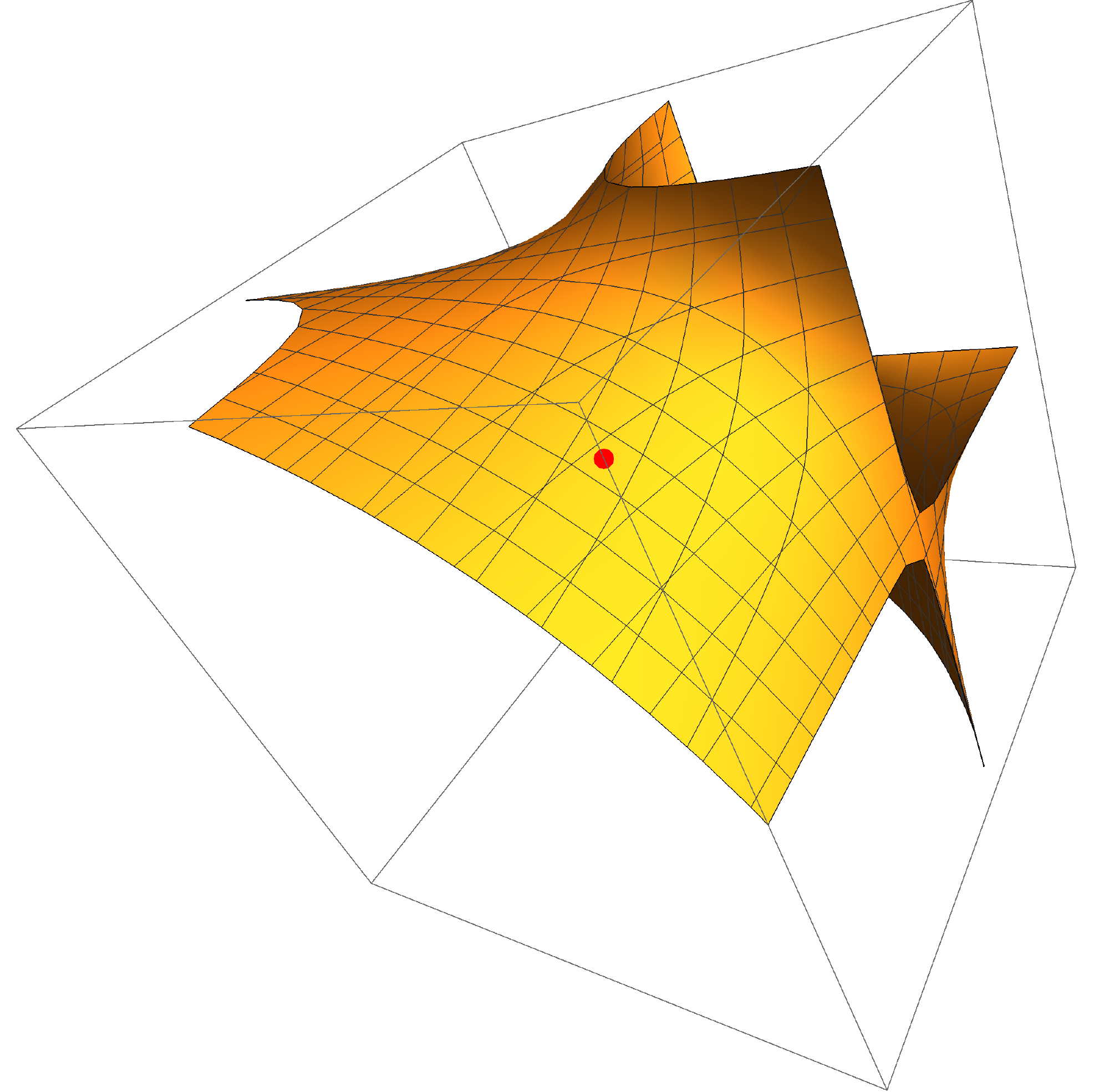}
    \includegraphics[width=.31\linewidth]{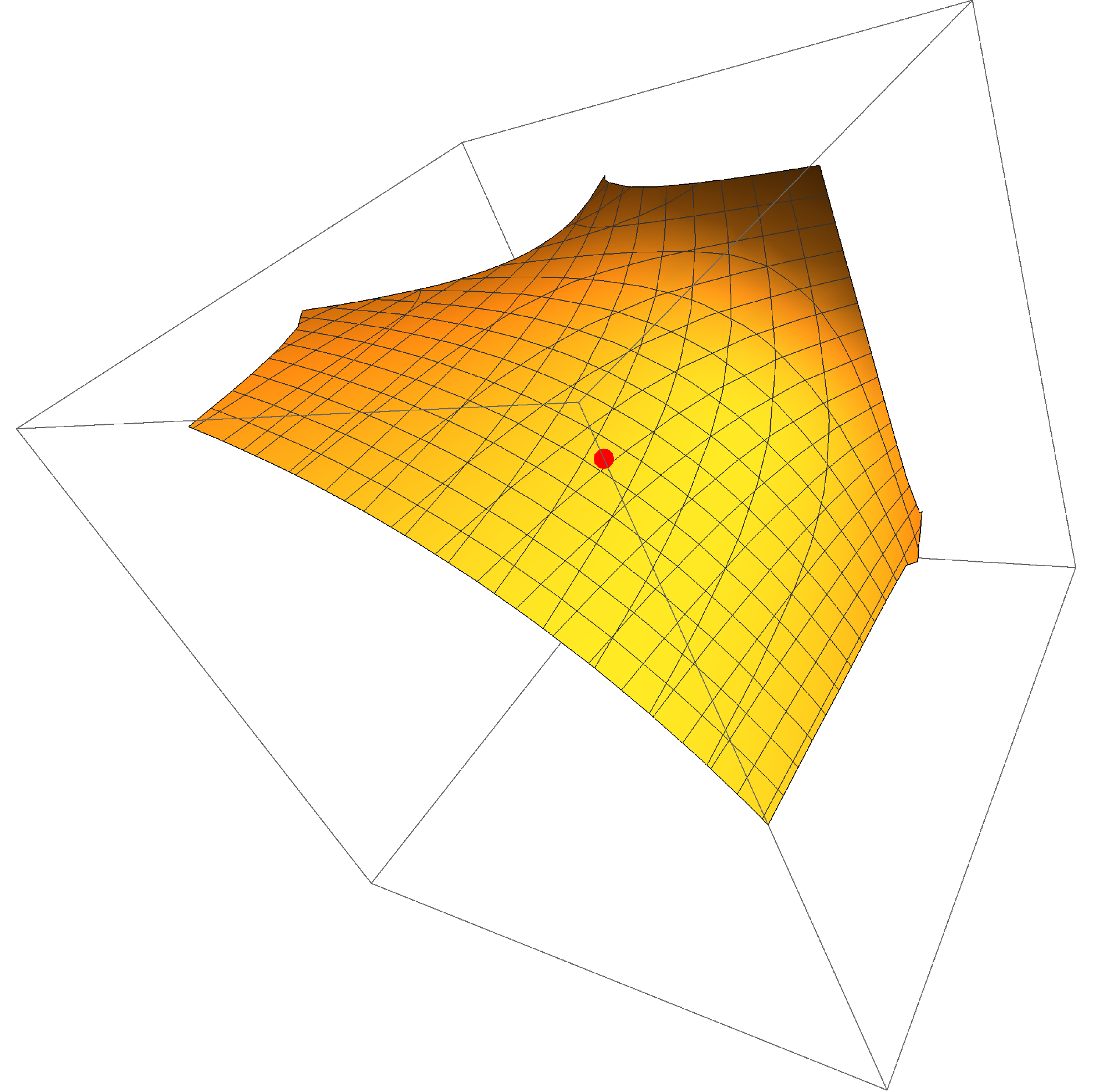}
    \caption{The left plot shows the feasible region $S$ and
      $\xlp$.
      The set $\{ x \in B \st g(x) = 0 \}$ appears in the middle plot.
      Finally, the visible points, $V_S(\xlp)$, are plotted on the right.
    }
    \label{fig:ex:quad}
  \end{figure}
\end{example}

\subsubsection{Polynomial constraints}

For a general polynomial $g$, the condition
\begin{equation} \label{eq:positive}
  g(x + \lambda(\bar x - x)) > 0 \text{ for every } \lambda \in (0,1]
\end{equation}
of \eqref{eq:characterization_visibility} asks for the univariate polynomial
$p_x(\lambda) = g(x + \lambda(\bar x - x))$ to be positive on $(0,1]$.
We can then use the theory of non-negative polynomials to translate a relaxation
of the infinitely many constraints \eqref{eq:positive} to a finite number of
constraints.
From the following characterization of non-negative polynomials on intervals we
can derive an extended formulation for the relaxation of
\eqref{eq:characterization_visibility},
\[
  R_S(\xlp) := \left\{
    x \in C \st
    g(x) = 0,\
    g(x + \lambda(\xlp - x)) \geq 0 \text{ for every } \lambda
    \in [0,1]
  \right\}.
\]
\begin{theorem} \label{thm:nonnegative}
  Let $p \in \mathbb{R}[\lambda]$ be a polynomial.
  Then $p$ is non-negative on $[0,1]$ if and only if
  \begin{enumerate}
    \item the degree of $p$ is $2d$ and there exist $s_1, s_2 \in
      \mathbb{R}[\lambda]$ of degree $d$ and $d-1$, respectively,
      such that
      \[
        p(\lambda) = s_1(\lambda)^2 + \lambda(1 - \lambda) s_2(\lambda)^2.
      \]

    \item the degree of $p$ is $2d+1$ and there exist $s_1, s_2 \in
      \mathbb{R}[\lambda]$ of degree $d$, such that
      \[
        p(\lambda) = \lambda s_1(\lambda)^2 + (1 - \lambda) s_2(\lambda)^2.
      \]
  \end{enumerate}
\end{theorem}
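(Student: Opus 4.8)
The backward direction is immediate: on $[0,1]$ we have $\lambda\ge 0$ and $1-\lambda\ge 0$, so in either representation every summand is a square times a non-negative factor, and hence $p\ge 0$ on $[0,1]$. The content is the forward direction, and the plan is to prove it constructively by factoring $p$ over $\mathbb{R}$ and reassembling the factors into the stated single-square form, letting the parity of $\deg p$ dictate which of the two normal forms appears.

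First I would isolate the two families of representable polynomials: call a polynomial \emph{E-type} if it equals $s_1^2+\lambda(1-\lambda)s_2^2$ and \emph{O-type} if it equals $\lambda s_1^2+(1-\lambda)s_2^2$, in both cases with the degree bounds of the statement. Three algebraic facts drive the argument. First, E-type is closed under multiplication, via the norm identity
\[
  (A^2+wB^2)(C^2+wD^2)=(AC+wBD)^2+w(AD-BC)^2,\qquad w:=\lambda(1-\lambda).
\]
Second, multiplying an E-type polynomial by $\lambda$ or by $(1-\lambda)$ yields an O-type polynomial, and vice versa, through identities such as $\lambda\bigl(s_1^2+w s_2^2\bigr)=\lambda s_1^2+(1-\lambda)(\lambda s_2)^2$; these conversions add exactly one to the degree and preserve single squares. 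Third, the elementary factors are representable: a perfect square $(\lambda-r)^2$ is trivially E-type; a positive affine factor is O-type since $\lambda+t=(1+t)\lambda+t(1-\lambda)$ and $s-\lambda=(s-1)\lambda+s(1-\lambda)$ for $t\ge 0$, $s\ge 1$; and each quadratic that is positive on the relevant range is E-type, because matching $s_1=a\lambda+b$, $s_2=c$ to the quadratic leads to a small system whose solvability is governed precisely by positivity.

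The proof then assembles $p$ from its real factorization. Every real root in $(0,1)$ has even multiplicity, since otherwise $p$ would change sign on $[0,1]$, and these contribute perfect squares; the irreducible quadratics are everywhere positive; and each real root outside $(0,1)$ contributes, up to a sign absorbed into the leading constant, a factor positive on $[0,1]$, hence a positive affine factor or a bare $\lambda$ or $1-\lambda$. Multiplying these representable pieces using the three facts above, and tracking how each factor of $\lambda$ or $(1-\lambda)$ toggles the E/O type, one obtains a single-square representation whose type matches the parity of $\deg p$, with the leading-coefficient sign (which for even degree may be negative, realized by the $-\lambda^{2d}$ contribution of $\lambda(1-\lambda)s_2^2$) coming out consistently. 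The main obstacle, and the genuine heart of the argument, is reassembling into \emph{single} squares rather than mere sums of squares with the \emph{exact} degree bounds: the naive grouping of a lone non-boundary linear factor produces a sum of two O-type terms, and collapsing this back to a single square forces a careful pairing of that factor with conjugate-root quadratics through the norm identity. Verifying that this pairing can always be carried out, together with the precise degree bookkeeping $\deg s_1=d,\ \deg s_2=d-1$ in the even case and $\deg s_1=\deg s_2=d$ in the odd case, is where I expect the real work to lie.
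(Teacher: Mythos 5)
The paper does not actually prove this theorem: its ``proof'' is the single line ``See \cite{PowersReznick2000}'', because the statement is the classical Luk\'acs (Markov--Luk\'acs) theorem on polynomials non-negative over an interval. Your factor-and-reassemble argument is, in outline, precisely the classical argument that the cited literature records, so you have reconstructed the intended proof rather than diverged from it --- and the step you flag as ``where the real work lies'' closes more easily than you fear. No pairing of a lone affine factor with conjugate-root quadratics is needed: your multiplication table simply has to be completed. Writing $w=\lambda(1-\lambda)$, alongside your E$\times$E norm identity one has the two further composition identities
\[
  \bigl(\lambda A^2+(1-\lambda)B^2\bigr)\bigl(\lambda C^2+(1-\lambda)D^2\bigr)
  =\bigl(\lambda AC+(1-\lambda)BD\bigr)^2+w\,(AD-BC)^2,
\]
\[
  \bigl(A^2+wB^2\bigr)\bigl(\lambda C^2+(1-\lambda)D^2\bigr)
  =\lambda\bigl(AC+(1-\lambda)BD\bigr)^2+(1-\lambda)\bigl(AD-\lambda BC\bigr)^2,
\]
both checked by direct expansion (the cross terms cancel), so O$\times$O is E-type and E$\times$O is O-type, with the degree bookkeeping coming out exactly right: for instance, O-factors of degrees $2a+1$ and $2c+1$ with $\deg A,\deg B\le a$ and $\deg C,\deg D\le c$ give a product of degree $2(a+c+1)$ with $\deg(\lambda AC+(1-\lambda)BD)\le a+c+1$ and $\deg(AD-BC)\le a+c$, as the even case requires. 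With these, induction over the real factorization (interior roots squared, bare $\lambda$ and $1-\lambda$, affine factors with roots outside $(0,1)$ made positive with signs absorbed into the leading constant, irreducible quadratics $\alpha\lambda^2+\beta\lambda+\gamma$ handled by your ansatz via $b^2=q(0)$, $(a+b)^2=q(1)$, $c^2=\beta-2ab$, which is solvable with $c^2\ge 0$ precisely because the discriminant is negative) finishes the proof, and the E/O type of the product toggles once per odd-degree factor, matching the parity of $\deg p$ as claimed. Two small points of hygiene: the degrees in the statement must be read as ``at most'' (e.g.\ $p=1$ forces $s_2=0$), and your backward direction is fine as stated since it only uses $\lambda\ge 0$ and $1-\lambda\ge 0$ on $[0,1]$.
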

\begin{proof}
  See \cite{PowersReznick2000}.
\end{proof}

\begin{theorem} \label{thm:poly_vis}
  Let $C$ be a closed convex set that contains $\xlp$.
  Let $g(x)$ be a polynomial such that $g(\xlp) > 0$ and $S = \{
  x \in C : g(x) \leq 0 \}$.
  Let $p_x(\lambda) = g(x + \lambda(\xlp - x))$.
  \begin{enumerate}
    \item If the degree of $g$ is $2d$, then
      \begin{equation*}
        R_S(\xlp) = \proj_x E,
      \end{equation*}
      where $E$ is
      \begin{multline*}
        \{
          \left( x, A, B \right)
          \in C \times \mathcal{S}^d_+ \times \mathcal{S}^d_+
          \st \\
          g(x) = 0,\\
          p_x^{\prime}(0) = B_{00}, \\
          \frac{p_x^{(k+2)}(0)}{(k+2)!} =
          \sum_{\mathclap{\substack{i + j = k\\ 0 \leq i,j \leq d-1}}}
          A_{ij} - B_{ij}
          + \sum_{\mathclap{\substack{i + j = k + 1\\ 0 \leq i,j \leq d-1}}}
          B_{ij}
          \text{, for }  0 \leq k \leq 2d-2
        \}.
        \end{multline*}

    \item If the degree of $g$ is $2d+1$, then
      \begin{equation*}
        R_S(\xlp) = \proj_x E,
      \end{equation*}
      where $E$ is
      \begin{multline*}
        \{
          \left( x, A, B \right)
          \in C \times \mathcal{S}^{d+1}_+ \times \mathcal{S}^d_+
          \st \\
          g(x) = 0,\\
          p_x^{\prime}(0) = A_{00}, \\
          \frac{p_x^{\prime\prime}(0)}{2} = 2A_{01} + B_{00}, \\
          \frac{p_x^{(k+3)}(0)}{(k+3)!} =
          \sum_{\substack{i + j = k + 2\\ 0 \leq i,j \leq d}}
          A_{ij}
          + \sum_{\substack{i + j = k + 1\\ 0 \leq i,j \leq d -1}}
          B_{ij}
          - \sum_{\substack{i + j = k\\ 0 \leq i,j \leq d -1}}
          B_{ij}
          \text{, for }  0 \leq k \leq 2d-2
        \}.
        \end{multline*}
  \end{enumerate}
\end{theorem}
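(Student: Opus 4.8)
The plan is to prove the set-equality $R_S(\xlp) = \proj_x E$ by showing, for each fixed $x \in C$, that $x$ satisfies $g(x) = 0$ together with the univariate positivity condition $p_x(\lambda) \geq 0$ for all $\lambda \in [0,1]$ if and only if there exist positive semidefinite matrices $A, B$ of the prescribed sizes with $(x, A, B) \in E$. From the definition of $R_S(\xlp)$, membership amounts exactly to $x \in C$, $g(x) = 0$, and non-negativity of $p_x$ on $[0,1]$, so the entire theorem reduces to a statement about the single univariate polynomial $p_x$, whose coefficients $p_x^{(n)}(0)/n!$ depend affinely on $x$.

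First I would exploit the equality constraint $g(x) = 0$, which says precisely $p_x(0) = 0$. Hence $\lambda$ divides $p_x$, so I can write $p_x(\lambda) = \lambda\, q_x(\lambda)$ with $\deg q_x = \deg p_x - 1$. Since $\lambda \geq 0$ on $[0,1]$, a short continuity argument at $\lambda = 0$ shows $p_x \geq 0$ on $[0,1]$ if and only if $q_x \geq 0$ on $[0,1]$. This degree reduction is the crucial step: it flips the parity of the degree, turning the even case $\deg g = 2d$ into an odd-degree problem of degree $2d-1$ for $q_x$, and the odd case $\deg g = 2d+1$ into an even-degree problem of degree $2d$. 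It is exactly what forces the Gram matrix $A$ down to the smaller size claimed, since applying Theorem~\ref{thm:nonnegative} directly to $p_x$ would produce a matrix one size too large.

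Next I would apply Theorem~\ref{thm:nonnegative} to $q_x$. In the even case ($\deg q_x = 2d-1$) part~2 gives $q_x = \lambda s_1^2 + (1-\lambda)s_2^2$ with $\deg s_1 = \deg s_2 = d-1$, whose Gram matrices lie in $\mathcal{S}^d_+$; multiplying by $\lambda$ yields $p_x = \lambda^2 s_1^2 + \lambda(1-\lambda)s_2^2$. In the odd case ($\deg q_x = 2d$) part~1 gives $q_x = s_1^2 + \lambda(1-\lambda)s_2^2$ with $\deg s_1 = d$, $\deg s_2 = d-1$, whose Gram matrices lie in $\mathcal{S}^{d+1}_+$ and $\mathcal{S}^d_+$; multiplying by $\lambda$ yields $p_x = \lambda s_1^2 + \lambda^2(1-\lambda)s_2^2$. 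Writing each square through its Gram matrix, so that the coefficient of $\lambda^m$ in $s_1^2$ is $\sum_{i+j=m}A_{ij}$ and likewise for $s_2^2$ with $B$, I would equate the coefficient of each power $\lambda^n$ in $p_x$, namely $p_x^{(n)}(0)/n!$, with the corresponding anti-diagonal sums of $A$ and $B$. The extra factors of $\lambda$ and $\lambda^2$ merely shift the anti-diagonal index, and carrying out this matching reproduces exactly the listed constraints of $E$, the constant term being absorbed by $g(x) = 0$ and the low-order terms $p_x'(0)$ and $p_x''(0)/2$ giving the separate equations shown.

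The equivalence then follows in both directions: if $x \in R_S(\xlp)$ then Theorem~\ref{thm:nonnegative} guarantees a decomposition of $q_x$, hence PSD matrices $A, B$ with $(x,A,B) \in E$; conversely, given any $(x,A,B) \in E$ with $A, B \succeq 0$, reading the coefficient equations backwards recovers a valid sum-of-squares certificate for $q_x$, so $q_x \geq 0$ on $[0,1]$, whence $p_x \geq 0$ on $[0,1]$ and $x \in R_S(\xlp)$. I expect the main obstacle to be purely in the bookkeeping of this coefficient matching: keeping the parity and size correspondence straight after the degree reduction, and carefully tracking how the $\lambda$- and $\lambda^2$-shifts move the anti-diagonal sums so that the ranges $0 \leq i,j \leq d-1$ (respectively $\leq d$) and the index range $0 \leq k \leq 2d-2$ come out precisely as stated.
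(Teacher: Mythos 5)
Your proposal is correct and takes essentially the same route as the paper: reduce membership in $R_S(\xlp)$ to nonnegativity of the univariate polynomial $p_x$ on $[0,1]$, invoke Theorem~\ref{thm:nonnegative}, and identify Taylor coefficients of $p_x$ with anti-diagonal sums of Gram matrices. The only difference is a cosmetic reordering: you factor $p_x = \lambda q_x$ first and apply the opposite-parity case of Theorem~\ref{thm:nonnegative} to $q_x$, whereas the paper applies the characterization to $p_x$ directly and then uses $s_1(0)^2 = p_x(0) = 0$ to extract the factor $\lambda$ (which is how it obtains the smaller Gram matrix, so the direct application is not actually ``one size too large'' in the paper's argument); both routes land on the identical decomposition $p_x = \lambda^2 r_1^2 + \lambda(1-\lambda) r_2^2$ in the even case, respectively $p_x = \lambda s_1^2 + \lambda^2(1-\lambda) r_2^2$ in the odd case, and hence on the same set $E$.
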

\begin{proof}
  We just prove the case of even degree as the proof for the odd degree case is
  similar.
  We have $x \in R_S(\xlp)$ if and only if $p_x(0) = 0$ and $p_x(\lambda)$
  is non-negative on $[0,1]$.
  By Theorem~\ref{thm:nonnegative}, this is equivalent to $p_x(0) = 0$ and there exist
  polynomials $s_1, s_2$ of degree $d$ and $d-1$, respectively,
  such that
  \[
    p_x(\lambda) = s_1(\lambda)^2 + \lambda(1 - \lambda) s_2(\lambda)^2.
  \]
  Given that $0 = p_x(0) = s_1(0)^2$, the polynomial $s_1$ has a root at $0$ and
  we can write it as $s_1(\lambda) = \lambda r_1(\lambda)$ where $r_1$ is
  a polynomial of degree $d-1$.
  Thus, $x \in R_S(\xlp)$ if and only if $p_x(0) = 0$ and there exist polynomials
  $r_1, r_2$ of degree $d-1$ such that
  \[
    p_x(\lambda) = \lambda^2 r_1(\lambda)^2 + \lambda(1 - \lambda) r_2(\lambda)^2.
  \]

  Let $\Lambda = (1, \lambda, \ldots, \lambda^{d-1})^\T$.
  The polynomials $r_i$ can be written as $r_i = c_i^\T \Lambda$ for some $c_i
  \in \mathbb{R}^{d}$.
  Then, $r_1(\lambda)^2 = \Lambda^\T A \Lambda$ and $r_2(\lambda)^2 = \Lambda^\T
  B \Lambda$ for some $A, B \in \mathcal{S}^d_+$.

  Thus, $x \in R_S(\xlp)$ if and only if $p_x(0) = 0$ and there exist
  $A, B \in \mathcal{S}^d_+$ such that
  \[
    p_x(\lambda) = \lambda^2 \Lambda^\T A \Lambda
    + \lambda(1 - \lambda) \Lambda^\T B \Lambda.
  \]
  Since $p_x(\lambda)$ is a polynomial of degree $2d$, its Taylor expansion at
  0 yields
  \[
    p_x(\lambda) = \sum_{k=1}^{2d} \frac{p_x^{(k)}(0)}{k!} \lambda^k.
  \]
  Identifying coefficients, we conclude the theorem.
\end{proof}
\begin{remark}
  One could also add the constraints $\rank(A) = \rank(B) = 1$ to $E$ in the
  statement of Theorem~\ref{thm:poly_vis}.
  The correctness can easily be seen from the proof since $A = c_1 c_1^\T$ and
  $B = c_2 c_2^\T$.
  Although it makes the set more restricted, the rank constraint is non-convex
  and does not change the projection.
  Thus, we decided to leave it out.
\end{remark}

We can recover Theorem~\ref{thm:qvis} from Theorem~\ref{thm:poly_vis}.
The set $E$ of Theorem~\ref{thm:poly_vis} for the quadratic case ($d = 1$) is described
by $g(x) = 0$, $p_x^{\prime}(0) = B_{00}$ and $p_x^{\prime\prime}(0)/2 = A_{00}
- B_{00}$, where $A_{00}, B_{00} \geq 0$.
This implies that $0 < g(\xlp) = p_x(1) = p_x^{\prime}(0)
+ p_x^{\prime\prime}(0)/2 = A_{00}$.
Therefore, $R_S(\xlp)$ consists of the $x$ such that $p_x(0) = 0$ and
$p^{\prime}_x(0) \geq 0$.
This last constraint is equivalent to $\langle \nabla g(x), \xlp -x \rangle \geq
0$ which is the only constraint needed, apart from $g(x) = 0$, to prove
Theorem~\ref{thm:qvis}.

The previous deduction is only possible because $V_S(\xlp) = R_S(\xlp)$ holds
for a quadratic constraint.
This equality does not hold as soon as the degree is greater than 2, even after
replacing $V_S(\xlp)$ by its closure, as shown in the following example.

\begin{example} \label{ex:closure}
  Consider $g(x_1, x_2) = (x_1^2 + x_2^2 - 1) x_1$, $S = \{ (x_1, x_2) \st
  g(x_1, x_2) \leq 0 \}$, and $\xlp = (1, -2)$.
  The set $S$ consists of the right half of the unit ball and the half space
  $x_1 \leq 0$ without the interior of the left half of the unit ball, see
  Figure~\ref{fig:counter}.
  The point $z = (-1,0)$ is not visible from $\xlp$, because $g(z + \lambda(\xlp
  - z)) = g(-1 + 2 \lambda, -2 \lambda) = ((2 \lambda - 1)^2 + 4 \lambda^2 - 1)
  (2 \lambda - 1) = 4 \lambda (2 \lambda - 1)^2 $ is zero at $\lambda
  = \frac{1}{2}$.
  On the other hand, $z \in R_S(\xlp)$ since $4 \lambda (2 \lambda - 1)^2 \geq
  0$ for every $\lambda \in [0,1]$.
  In this example $V_S(\xlp)$ is closed, so we conclude that $\cl V_S(\xlp)
  \neq R_S(\xlp)$.
  \begin{figure}
    \centering
    \includegraphics[width=.4\linewidth]{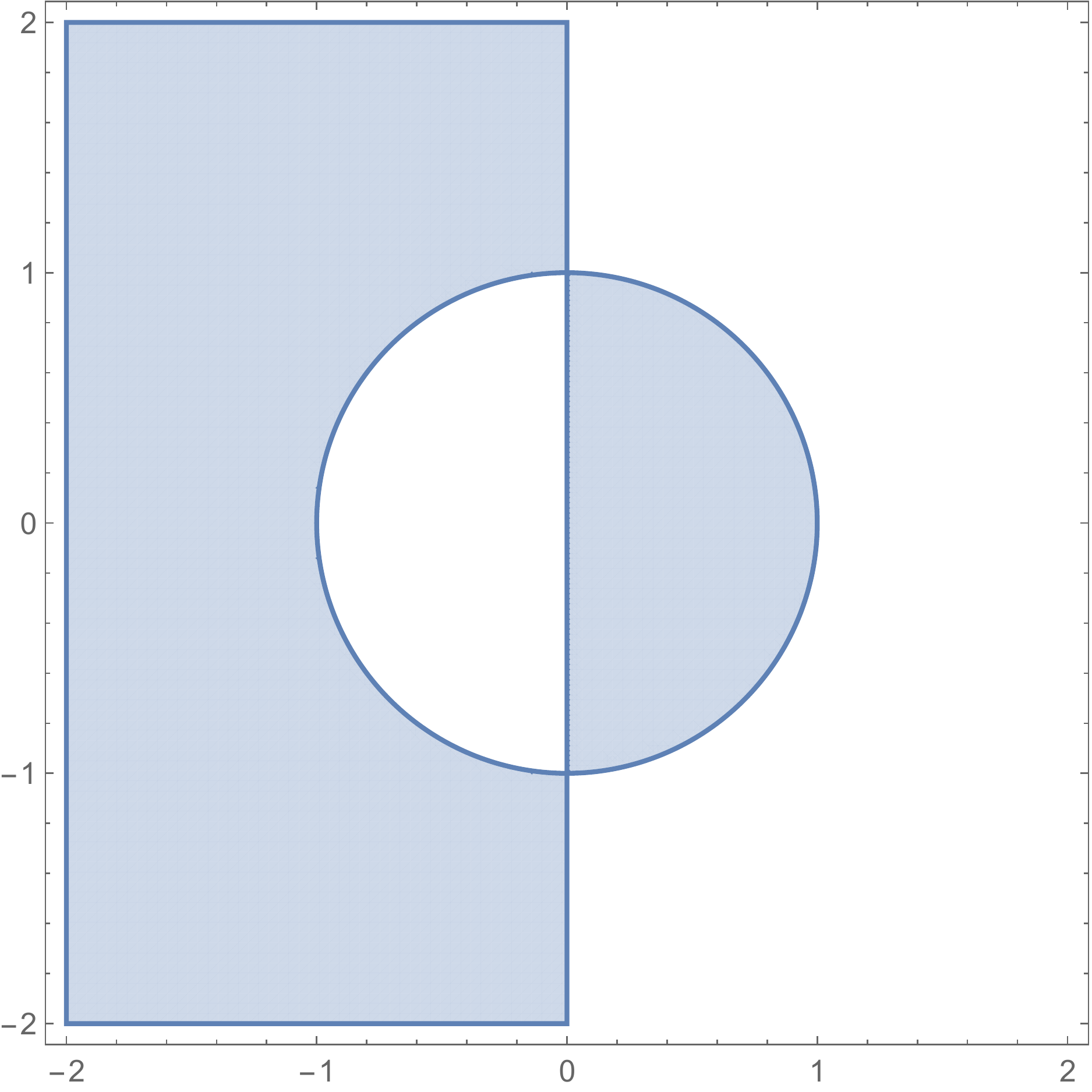}
    \caption{Feasible region $g(x) \leq 0$ of Example~\ref{ex:closure} that shows that
      $\cl V_S(\xlp) \neq R_S(\xlp)$ when the degree of $g$ is greater than 2.
    } \label{fig:counter}
  \end{figure}
\end{example}

\section{Conclusions and outlook}
Using the concept of visible points, we introduced a technique that allows to
reduce the domains in separation problems.
Such a result is particularly interesting for MINLP, since the tightness of the
domain directly affects the quality of underestimators, from which cuts are
obtained.

Some questions that could be interesting to look at in the future are the
following.
Is there a tighter domain other than $V_S$ that can be efficiently exploited?
Is there a useful characterizations of $V_S$ when $S$ contains more than one
non-convex constraint, in particular, if some variables are restricted to be
integer?

\paragraph{Acknowledgments}
The author is particularly indebted to Stefan Vigerske, Franziska Schl\"osser,
and Rainer Sinn for many helpful discussions.
He would also like to thank Juan Pablo Vielma, Benjamin M\"uller, and Daniel
Rehfeldt.

\bibliographystyle{spmpsci}
\bibliography{visible}

\end{document}